\definecolor{navy}{HTML}{004d99}
\numberwithin{equation}{section}
\theoremstyle{plain}
\newtheorem{theorem}{Theorem}[section]
\newtheorem{lemma}[theorem]{Lemma}
\theoremstyle{definition}
\newtheorem{definition}[theorem]{Definition}
\let\c@equation\c@theorem 
\begin{document}

\title[ The Rota-Baxter algebra structures on  split semi-quaternion algebra]
{ The Rota-Baxter algebra structures on  split semi-quaternion algebra}

\author[Chen]{Quanguo Chen}
\address{(Chen)College of Mathematics and Statistics, Kashi University, Xinjiang, Kashi, China}
\email{cqg211@163.com}

\author[Deng]{Yong Deng}
\address{(Deng)College of Mathematics and Statistics, Kashi University, Xinjiang, Kashi, China}
\email{dengyongks@126.com}


\begin{abstract}
 In this paper, we shall describe all the Rota-Baxter operators with any weight on split semi-quaternion algebra. Firstly, we give the matrix characterization of the Rota-Baxter operator on split semi-quaternion algebra. Then we give the corresponding matrix representations of all the Rota-Baxter operators with any weight on split semi-quaternion algebra. Finally, we shall prove that the Ma et al. results  about the Rota-Baxter operators on  Sweedler algebra are just special cases of our results. 
\end{abstract}

\subjclass[2020]{11R52}
\keywords{split semi-quaternion algebra; weight; Rota-Baxter operator}
\thanks{This work was supported by the National Natural Science
	Foundation of China (No. 12271292) and the Natural Foundation of Shandong Province(No. ZR2022MA002).
}
\maketitle

\setcounter{section}{0}
\section{Introduction}
\label{xxsec0}

In the 1960s, in order to solve the analytical and combinatorial problems, the Rota-Baxter operators are introduced and generalized(\cite{Baxter},\cite{Rota1},\cite{Rota2}). The Rota-Baxter operator can be regarded as an algebraic abstraction of  the integral operator.   For more than half a century, the Rota-Baxter operator theory is progressively applied to the quantum field theory(\cite{Connes1},\cite{Connes2},\cite{Connes3}), Operads(\cite{Leroux1},\cite{Leroux2}), Hopf algebras(\cite{Ebrahimi2}),  commutative algebra, combinatorics and number theory(\cite{Guo1},\cite{Ebrahimi1},
\cite{Guo2}). The broad connections of Rota-Baxter operator with many areas in mathematics and mathematical physics are remarkable.  More about the Rota-Baxter operator theory and application can be seen in\cite{Jian1} and \cite{Jian2}.

Quaternions were invented by William Rowan Hamilton as an extension to the complex number in 1843.  Quaternions are extensively used in
many areas such as computer science, physics, differential geometry, quantum
physics and pure algebra (\cite{Rodman}, \cite{Vince}). As  new quaternions, a brief  introduction of the split semi-quaternions is provided in \cite{Rosenfeld}, and some of their algebraic properties are given in  \cite{Jafari}.

One of the interesting direction in the study of Rota–Baxter operators is a problem of classification of Rota–Baxter operators on a given algebra.  The Rota-Baxter operators on 2 and 3 dimensional algebra are studied in \cite{An},\cite{Guo6} and \cite{Li}. The Rota-Baxter operator of the second-order full-matrix algebraic weight 0 on complex fields by both standard and Grobner basis methods was given in \cite{Tang}.  The Rota-Baxter operators on four-dimensional Sweedler algebra considered as an associative algebra were found in \cite{Ma} and \cite{Ma1}.

The purpose of this paper is to characterize all the Rota-Baxter operators on split semi-quaternion algebra.

The paper is organized as follows.

In Sec.2, we recall some basic definitions and results for split  semi-quaternion algebra and the  Rota-Baxter operator. We give the matrix characterization of the Rota-Baxter operator on split semi-quaternion algebra in Sec.3.
We shall describe  all the Rota-Baxter operators with weight 0    in Sec.4. In Sec. 5, all  the Rota-Baxter operators with non-zero weight are given. We shall discuss the relationships between  Ma et al. results  about the Rota-Baxter operators on  Sweedler algebra and our results in Sec. 6.

\section{preliminaries}
\label{xxsec1}
Throughout the paper, $\mathbb{R}$ denotes the real number field. All algebras
are over $\mathbb{R}$ and linear means $\mathbb{R}$-linear.   Given a matrix $M$, $M^{T}$ means the transpose of $M$, and 
$$M(4)=\left(
\begin{array}{cccc}
	M & 0 & 0 & 0 \\
	0 & M & 0 & 0 \\
	0 & 0 & M & 0 \\
	0 & 0 & 0 & M \\
\end{array}
\right).$$

\subsection{Split Semi-quaternion algebra}

A split semi-quaternion $q$ is an expression of
the form
$q=a_{0}e_{0}+a_{1}e_{1}+a_{2}e_{2}+a_{3}e_{3},$
where $a_{0}, a_{1}, a_{2}, a_{3}$ are real numbers and $e_{0}=1,e_{1}, e_{2}, e_{3}$ satisfy the following equalities:
$$
 e_{1}^{2}=1, e_{2}^{2}=0, e_{3}^{2}=0, e_{1}e_{2}=e_{3}=-e_{2}e_{1}, e_{2}e_{3}=0=e_{3}e_{2}, e_{3}e_{1}=-e_{2}=-e_{1}e_{3}.
$$
The set of all  split semi-quaternion is denoted by $\mathbb{H}_{ss}$. The addition
and multiplication of $\mathbb{H}_{ss}$ are given as follows: for any $q=a_{0}+a_{1}e_{1}+a_{2}e_{2}+a_{3}e_{3}, p=b_{0}+b_{1}e_{1}+b_{2}e_{2}+b_{3}e_{3}\in \mathbb{H}_{s}$,
$$
q+p=(a_{0}+b_{0})+(a_{1}+b_{1})e_{1}+(a_{2}+b_{2})e_{2}+(a_{3}+b_{3})e_{3},
$$
\begin{eqnarray*}
	qp&=&(a_{0}b_{0}+a_{1} b_{1})
	+(a_{1} b_{0}+a_{0} b_{1})e_{1}\\
	& &+(a_{2} b_{0}-a_{3} b_{1}+ a_{0} b_{2}+a_{1} b_{3})e_{2}
	+(a_{3} b_{0}- a_{2} b_{1}+ a_{1} b_{2}+ a_{0} b_{3})e_{3}.
\end{eqnarray*}
It is easily checked that  $\mathbb{H}_{ss}$  with the above addition and multiplication is an associative
algebra. We call  $\mathbb{H}_{ss}$ an algebra of split semi-quaternions (or split semi-quaternion algebra).

\subsection{ Rota-Baxter operators}
\begin{definition}
	Let $\lambda$ be a given element of $\mathbb{R}$. A Rota-Baxter algebra of weight $\lambda$ is a pair $(A, \mathcal{P})$ consisting of an algebra $A$ and a linear operator $\mathcal{P}: A\rightarrow A$ that satisfies the Rota-Baxter equation
	\begin{equation}\label{c1}\tag{2.1}
		\mathcal{P}(x)\mathcal{P}(y)=\mathcal{P}(\mathcal{P}(x)y)+\mathcal{P}(x\mathcal{P}(y))+\lambda \mathcal{P}(xy), \forall x,y \in \mathbb{C}.
	\end{equation}
	Then $\mathcal{P}$ is called a Rota-Baxter operator of weight $\lambda$.
\end{definition}

\section{Rota-Baxter operators on $\mathbb{H}_{ss}$}

Let $\mathcal{P}$ be a linear transformation on $\mathbb{H}_{ss}$, and $$
P=\left(
\begin{array}{cccc}
	a_{11} & a_{12}& a_{13} & a_{14}\\
	a_{21} & a_{22}& a_{23} & a_{24} \\
	a_{31} & a_{32}& a_{33} & a_{34} \\
	a_{41} & a_{42}& a_{43} & a_{44} \\
\end{array}
\right)=(\gamma_{0},\gamma_{1},\gamma_{2},\gamma_{3})
$$ the matrix of $\mathcal{P}$ with  respect to the basis $e_{0},e_{1}, e_{2}, e_{3}$. Notice easily that $\gamma_{i}$ is the coordinate of  $\mathcal{P}$ with  respect to the basis $e_{0},e_{1}, e_{2}, e_{3}$. Thus we have
$$
\mathcal{P}(e_{i})=(e_{0},e_{1}, e_{2}, e_{3})\gamma_{i}=\gamma_{i}^{T}\left(
\begin{array}{c}
	e_{0} \\
	e_{1} \\
	e_{2} \\
	e_{3} \\
\end{array}
\right).
$$

\begin{lemma}\label{lem1}
	Let $\mathcal{P}$ be a linear transformation on $\mathbb{H}_{ss}$. The following equation   holds:
	\begin{equation}\label{t11}\tag{3.1}
		\mathcal{P}(e_{i})\mathcal{P}(e_{j})
		=\gamma_{i} ^{T}C\gamma_{j}(4)
		\left(
		\begin{array}{c}
			e_{0}\\
			e_{1} \\
			e_{2} \\
			e_{3} \\
		\end{array}
		\right),
	\end{equation}
	where
	$$
	C=
	\left(
	\begin{array}{cccccccccccccccc}
		1 & 0 & 0 & 0 & 0 & 1 & 0 & 0 & 0 & 0 & 1 & 0& 0 & 0 & 0 & 1 \\
		0 & 1& 0 & 0 & 1 & 0 & 0 & 0 & 0 & 0 & 0 & 1& 0 & 0 & 1 & 0 \\
		0 & 0 & 0& 0 & 0 & 0& 0 & 0 & 1& 0 & 0 & 0& 0 & -1 & 0 & 0\\
		0& 0 & 0 & 0 & 0 & 0 & 0& 0 & 0 & -1 & 0 & 0& 1 & 0 & 0 & 0 \\
	\end{array}
	\right).
	$$

\end{lemma}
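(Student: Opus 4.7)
The plan is a direct verification: both sides of (\ref{t11}) are bilinear in the column vectors $\gamma_i$ and $\gamma_j$, so the identity reduces to checking agreement of four real-valued bilinear forms, one for each basis vector $e_0, e_1, e_2, e_3$ appearing on the right.

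On the left, using the coordinate expansion $\mathcal{P}(e_i) = \sum_{k=0}^{3} a_{k+1,i+1}\, e_k$ and the analogous one for $\mathcal{P}(e_j)$, I would write
$$\mathcal{P}(e_i)\mathcal{P}(e_j) = \sum_{k,l=0}^{3} a_{k+1,i+1}\, a_{l+1,j+1}\, e_k e_l,$$
then substitute the sixteen products $e_k e_l$ using the multiplication table of $\mathbb{H}_{ss}$ recalled in Section~\ref{xxsec1}, and collect by $e_0, e_1, e_2, e_3$. This yields four explicit bilinear expressions in the entries of $\gamma_i$ and $\gamma_j$.

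On the right, I would decompose $C = (C_1, C_2, C_3, C_4)$ into four $4 \times 4$ blocks corresponding to its four column blocks. Because $\gamma_j(4)$ is block-diagonal with $\gamma_j$ on each diagonal block, the product $C\gamma_j(4)$ collapses to the $4 \times 4$ matrix whose columns are $C_1\gamma_j, C_2\gamma_j, C_3\gamma_j, C_4\gamma_j$; consequently
$$\gamma_i^T C \gamma_j(4) \begin{pmatrix} e_0 \\ e_1 \\ e_2 \\ e_3 \end{pmatrix} = \sum_{r=1}^{4} (\gamma_i^T C_r \gamma_j)\, e_{r-1}.$$
It then suffices to check, for each $r \in \{1,2,3,4\}$, that $\gamma_i^T C_r \gamma_j$ equals the coefficient of $e_{r-1}$ obtained in the previous step. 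For instance, since only $e_0^2 = e_0$ and $e_1^2 = e_0$ contribute to the $e_0$ coefficient, one needs $C_1 = \mathrm{diag}(1,1,0,0)$, matching the first four columns of $C$; the remaining three blocks are verified in the same way.

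This is essentially a bookkeeping argument, and I expect no real obstacle beyond careful indexing. The one place requiring attention is the sign tracking coming from $e_2 e_1 = -e_1 e_2$ and $e_3 e_1 = -e_1 e_3$ (combined with $e_2^2 = e_3^2 = e_2 e_3 = e_3 e_2 = 0$, which eliminates several potential terms); these antisymmetric relations are precisely the origin of the $-1$ entries in the third and fourth column blocks of $C$.
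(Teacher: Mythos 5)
Your proposal is correct and is essentially the paper's own proof: the paper likewise writes $\mathcal{P}(e_i)\mathcal{P}(e_j)=\gamma_i^{T}\,(e_ke_l)_{k,l}\,\gamma_j$, substitutes the multiplication table, and splits the resulting matrix of basis products into four constant coefficient matrices attached to $e_0,e_1,e_2,e_3$, which are exactly your blocks $C_1,\dots,C_4$, with the block-diagonal structure of $\gamma_j(4)$ reassembling them into $\gamma_i^{T}C\gamma_j(4)$. Your explicit block-by-block verification is in fact slightly more careful than the paper's write-up: the paper's displayed $e_2$-coefficient matrix omits the entries coming from $e_1e_3=e_2$ and $e_3e_1=-e_2$ (a typo in the intermediate display only, since the block $C_3$ in the lemma statement correctly contains the $(2,4)$ entry $1$ and the $(4,2)$ entry $-1$), and your sign-tracking bookkeeping would catch precisely this.
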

\begin{proof}
	For any $i,j$, since \begin{eqnarray*}
		\mathcal{P}(e_{i})\mathcal{P}(e_{j})
		&=&\gamma_{i}^{T}
		\left(
		\begin{array}{c}
			e_{0} \\
			e_{1} \\
			e_{2} \\
			e_{3} \\
		\end{array}
		\right)
		(e_{0},e_{1},e_{2},e_{3})\gamma_{j}\\
		&=&\gamma_{i}^{T}
		\left(
		\begin{array}{cccc}
			e_{0} & e_{1}& e_{2} & e_{3} \\
			e_{1} &  e_{0} & e_{3} &  e_{2}\\
			e_{2} & -e_{3} & 0 & 0 \\
			e_{3} & -e_{2} & 0 & 0 \\
		\end{array}
		\right)\gamma_{j}\\
		&=&\gamma_{i}^{T}
		\left(
		\begin{array}{cccc}
			1 & 0& 0 & 0 \\
			0 & 1  & 0 & 0\\
			0 & 0 & 0 & 0 \\
			0 & 0 & 0 & 0 \\
		\end{array}
		\right)\gamma_{j}e_{0}
		+\gamma_{i}^{T}
		\left(
		\begin{array}{cccc}
			0& 1& 0 & 0 \\
			1 & 0 & 0 & 0\\
			0 & 0 & 0 & 0 \\
			0 & 0 & 0& 0  \\
		\end{array}
		\right)\gamma_{j}e_{1}\\
		& &+\gamma_{i}^{T}
		\left(
		\begin{array}{cccc}
			0 & 0& 1 & 0 \\
			0 & 0 & 0 & 0\\
			1 & 0 & 0  & 0 \\
			0 & 0 & 0 & 0  \\
		\end{array}
		\right)\gamma_{j}e_{2}
		+\gamma_{i}^{T}
		\left(
		\begin{array}{cccc}
			0& 0& 0 & 1 \\
			0 & 0 & 1 & 0\\
			0 & -1 & 0 & 0 \\
			1 & 0 & 0 & 0  \\
		\end{array}
		\right)\gamma_{j}e_{3}\\
		&=&\gamma_{i} ^{T}C\left(
		\begin{array}{cccc}
			\gamma_{j}&  &  &  \\
			& \gamma_{j} &  &  \\
			& & \gamma_{j}&  \\
			&  &  &\gamma_{j}\\
		\end{array}
		\right)
		\left(
		\begin{array}{c}
			e_{0}\\
			e_{1} \\
			e_{2} \\
			e_{3} \\
		\end{array}
		\right),
	\end{eqnarray*}
	so this yields the equation (\ref{t11}). Other equations can be similarly checked.
\end{proof}
\begin{lemma}\label{lem2}
	Let $\mathcal{P}$ be a linear transformation on $\mathbb{H}_{ss}$. The following equations   hold:
		$$
	e_{i}\mathcal{P}(e_{j})=\gamma_{j}^{T}E_{i}^{T}\left(
	\begin{array}{c}
		e_{0}\\
		e_{1} \\
		e_{2} \\
		e_{3} \\
	\end{array}
	\right),
	\mathcal{P}(e_{i})e_{1}=\gamma_{i}^{T}
	\left(
	\begin{array}{cccc}
		0 & 1 & 0 & 0 \\
		1 & 0 & 0 & 0 \\
		0 & 0 & 0 & -1 \\
		0 & 0 & -1 & 0 \\
	\end{array}
	\right)\left(
	\begin{array}{c}
		e_{0}\\
		e_{1} \\
		e_{2} \\
		e_{3} \\
	\end{array}
	\right),
	$$
	$$
	\mathcal{P}(E_{i})e_{2}=\gamma_{i}^{T}
	\left(
	\begin{array}{cccc}
		0 & 0 & 1 & 0 \\
		0 & 0 & 0 & 1 \\
		0& 0 & 0 & 0 \\
		0 & 0& 0 & 0 \\
	\end{array}
	\right)\left(
	\begin{array}{c}
		e_{0}\\
		e_{1} \\
		e_{2} \\
		e_{3} \\
	\end{array}
	\right),
	\mathcal{P}(e_{i})e_{3}=\gamma_{i}^{T}
	\left(
	\begin{array}{cccc}
		0 & 0 & 0 & 1 \\
		0 & 0 & 1 & 0 \\
		0 & 0 & 0 & 0 \\
		0 & 0 & 0 & 0 \\
	\end{array}
	\right)\left(
	\begin{array}{c}
		e_{0}\\
		e_{1} \\
		e_{2} \\
		e_{3} \\
	\end{array}
	\right),
	$$
where 
$$
E_{0}=\left(
\begin{array}{cccc}
	1 & 0 & 0 & 0 \\
	0 & 1 & 0 & 0 \\
	0 & 0 & 1 & 0 \\
	0 & 0 & 0 & 1 \\
\end{array}
\right), E_{1}=\left(
\begin{array}{cccc}
	0 & 1 & 0 & 0 \\
	1 & 0 & 0 & 0 \\
	0 & 0 & 0 & 1 \\
	0 & 0 & 1 & 0 \\
\end{array}
\right),E_{2}=\left(
\begin{array}{cccc}
	0 & 0 & 0 & 0 \\
	0 & 0 & 0 & 0 \\
	1 & 0 & 0 & 0 \\
	0 & -1 & 0 & 0 \\
\end{array}
\right),
$$$$E_{3}=\left(
\begin{array}{cccc}
	0 & 0 & 0 & 0 \\
	0 & 0 & 0 & 0 \\
	0 & -1 & 0 & 0 \\
	1 & 0 & 0 & 0 \\
\end{array}
\right).
$$
\end{lemma}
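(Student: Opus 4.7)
The plan is to mimic the computation used in the proof of Lemma~\ref{lem1}: expand each $\mathcal{P}$-image in the $\gamma$-coordinates and reduce every product to the multiplication table of $\mathbb{H}_{ss}$ already assembled (inside the proof of Lemma~\ref{lem1}) as
\[
N=\begin{pmatrix} e_0 & e_1 & e_2 & e_3 \\ e_1 & e_0 & e_3 & e_2 \\ e_2 & -e_3 & 0 & 0 \\ e_3 & -e_2 & 0 & 0 \end{pmatrix}.
\]
Note that no Rota-Baxter hypothesis enters anywhere in this lemma; $\mathcal{P}$ is used only as a linear map, so the four identities are really four purely computational statements about left and right multiplication by basis elements.

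For the first identity I would start from $\mathcal{P}(e_j)=\gamma_j^T(e_0,e_1,e_2,e_3)^T$ and distribute left-multiplication by $e_i$ through the column, obtaining $e_i\mathcal{P}(e_j)=\gamma_j^T(e_ie_0,e_ie_1,e_ie_2,e_ie_3)^T$. The column on the right is the $i$th row of $N$. Rewriting each $e_ie_k$ in the basis $\{e_0,e_1,e_2,e_3\}$ gives a scalar $4\times 4$ matrix $M_i$ with $(e_ie_0,\ldots,e_ie_3)^T=M_i(e_0,\ldots,e_3)^T$, and then a direct row-by-row comparison with $N$ shows $M_0=I$, $M_1$ is symmetric, and $M_2,M_3$ are the transposes of $E_2,E_3$. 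Substituting $M_i=E_i^T$ yields the claimed formula $e_i\mathcal{P}(e_j)=\gamma_j^T E_i^T(e_0,\ldots,e_3)^T$.

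The three right-multiplication identities are handled in exactly the same spirit but with the multiplication performed on the other side: $\mathcal{P}(e_i)e_j=\gamma_i^T(e_0e_j,e_1e_j,e_2e_j,e_3e_j)^T$, so the relevant vector is now the $j$th column of $N$. For $j=1,2,3$ those columns are $(e_1,e_0,-e_3,-e_2)^T$, $(e_2,e_3,0,0)^T$, and $(e_3,e_2,0,0)^T$, and reading off their coordinates reproduces exactly the three displayed matrices.

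The main \emph{obstacle} is really only bookkeeping: keeping the transpose conventions straight when repackaging $\gamma_j^T(\text{row of }N)$ as $\gamma_j^T E_i^T(e_0,\ldots,e_3)^T$, and tracking the minus signs coming from the anti-commutations $e_2e_1=-e_3$ and $e_3e_1=-e_2$—these are precisely the entries that distinguish $E_2,E_3$ and the $\mathcal{P}(e_i)e_1$ matrix from permutation-type matrices. As a sanity check I would specialize the first identity to $\gamma_j=\delta_{\cdot k}$ and verify that the $k$th row of $E_i$ recovers $e_ie_k$ expanded in the basis.
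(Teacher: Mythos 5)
Your proposal is correct: the paper's own proof is just ``Straightforward,'' and your direct expansion via the multiplication table (recognizing $e_i\mathcal{P}(e_j)$ as $\gamma_j^T$ times the $i$th row of the table, so that $M_i=E_i^T$, and $\mathcal{P}(e_i)e_j$ as $\gamma_i^T$ times the $j$th column) is precisely the computation the authors intend, mirroring their proof of Lemma~\ref{lem1}. All the sign checks ($e_2e_1=-e_3$, $e_3e_1=-e_2$) and the identifications $M_1=E_1^T$, $M_2=E_2^T$, $M_3=E_3^T$ in your argument are accurate.
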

\begin{proof}
	Straightforward. 
\end{proof}

From lemmas \ref{lem1} and \ref{lem2}, the following main theorem can be obtained.

\begin{theorem}\label{thm1}
	$\mathcal{P}$ is a Rota-Baxter operator of weight $\lambda$ on $\mathbb{H}_{ss}$ if and only if the column vectors  $\gamma_{i}$ of $P$  and $P$ satisfy the following equation:
	$$
	\gamma_{0}(4)^{T}C^{T}P=P(E_{0},E_{1},E_{2},E_{3})\gamma_{0}(4)+P^{2}+\lambda P,
	$$
	\begin{eqnarray*}
	\gamma_{1}(4)^{T}C^{T}P=P(E_{0},E_{1},E_{2},E_{3})\gamma_{1}(4)+P\left(
		\begin{array}{cccc}
			0 & 1 & 0 & 0 \\
			1 & 0 & 0 & 0 \\
			0 & 0 & 0 & -1 \\
			0 & 0 & -1 & 0 \\
		\end{array}
		\right)^{T}P+\lambda P\left(
		\begin{array}{cccc}
			0 & 1 & 0 & 0 \\
			1 & 0 & 0 & 0 \\
			0 & 0 & 0 & -1 \\
			0 & 0 & -1 & 0 \\
		\end{array}
		\right)^{T},
	\end{eqnarray*}
	
	\begin{eqnarray*}
	\gamma_{2}(4)^{T}C^{T}P=P(E_{0},E_{1},E_{2},E_{3})\gamma_{2}(4)+P\left(
		\begin{array}{cccc}
			0 & 0 & 1 & 0 \\
			0 & 0 & 0 & 1 \\
			0& 0 & 0 & 0 \\
			0 & 0& 0 & 0 \\
		\end{array}
		\right)^{T}P+\lambda P\left(
		\begin{array}{cccc}
			0 & 0 & 1 & 0 \\
			0 & 0 & 0 & 1 \\
			0& 0 & 0 & 0 \\
			0 & 0& 0 & 0 \\
		\end{array}
		\right)^{T},
	\end{eqnarray*}
	\begin{eqnarray*}
	\gamma_{3}(4)^{T}C^{T}P=P(E_{0},E_{1},E_{2},E_{3})\gamma_{3}(4)+P\left(
		\begin{array}{cccc}
			0 & 0 & 0 & 1 \\
			0 & 0 & 1 & 0 \\
			0 & 0 & 0 & 0 \\
			0& 0 & 0 & 0 \\
		\end{array}
		\right)^{T}P+\lambda P\left(
		\begin{array}{cccc}
			0 & 0 & 0 & 1 \\
			0 & 0 &1 & 0 \\
			0 & 0 & 0 & 0 \\
			0 & 0 & 0 & 0 \\
		\end{array}
		\right)^{T},
	\end{eqnarray*}
	where $C$ and $E_{i}(i=0,1,2,3)$ are shown in Lemmas \ref{lem1} and \ref{lem2}.
\end{theorem}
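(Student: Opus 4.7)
The plan is to use the bilinearity of the Rota-Baxter identity (\ref{c1}) to reduce verification to basis pairs $(e_i, e_j)$, and then, for each fixed $j \in \{0,1,2,3\}$, to bundle the four scalar equations obtained by varying $i$ into a single $4 \times 4$ matrix identity whose $i$-th column holds the coordinates, with respect to $e_0, e_1, e_2, e_3$, of the expression at $(e_i, e_j)$. Since matrix equality amounts to column-wise equality, this bundling is an honest \emph{if and only if}, so working through the four choices of $j$ will give exactly the four stated equations.

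For the left-hand side, Lemma \ref{lem1} yields $\mathcal{P}(e_i)\mathcal{P}(e_j) = \gamma_i^T C \gamma_j(4)(e_0,e_1,e_2,e_3)^T$, whose coordinate column is $\gamma_j(4)^T C^T \gamma_i$. Varying $i$ and recalling $P=(\gamma_0,\gamma_1,\gamma_2,\gamma_3)$, the collected matrix is $\gamma_j(4)^T C^T P$, matching the left side of each stated equation.

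For the right-hand side, I write $N_j$ for the explicit $4 \times 4$ matrix occurring in the Lemma \ref{lem2} formula for $\mathcal{P}(e_i) e_j$, with the convention $N_0 = I$ (so that $\mathcal{P}(e_i) e_0 = \mathcal{P}(e_i) = \gamma_i^T I (e_0,\ldots,e_3)^T$). Then $\mathcal{P}(e_i) e_j$ has coordinate column $N_j^T \gamma_i$, so $\mathcal{P}(\mathcal{P}(e_i) e_j)$ has coordinate column $P N_j^T \gamma_i$ and assembles into $P N_j^T P$, collapsing to $P^2$ when $j = 0$. Specializing the same formulas to $\mathcal{P} = \id$ identifies the coordinate column of $e_i e_j$ as the $i$-th column of $N_j^T$, so the $\lambda \mathcal{P}(e_i e_j)$ terms assemble into $\lambda P N_j^T$. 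Both contributions match the right side of each stated equation.

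The remaining term $\mathcal{P}(e_i \mathcal{P}(e_j))$ is the crux. By Lemma \ref{lem2}, $e_i \mathcal{P}(e_j) = \gamma_j^T E_i^T (e_0,\ldots,e_3)^T$ has coordinate column $E_i \gamma_j$, so $\mathcal{P}(e_i \mathcal{P}(e_j))$ has coordinate column $P E_i \gamma_j$. The key factorization is
$$
(P E_0 \gamma_j,\ P E_1 \gamma_j,\ P E_2 \gamma_j,\ P E_3 \gamma_j)=P(E_0,E_1,E_2,E_3)\gamma_j(4),
$$
because the block-diagonal shape of $\gamma_j(4)$ extracts one copy of $\gamma_j$ from each $4\times 4$ block $E_i$. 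Plugging everything into (\ref{c1}) and reading off columns yields the four stated identities. I expect the main obstacle to be clerical rather than conceptual: keeping transpose conventions straight across four different matrices $N_j$, and confirming that for $j \in \{1,2,3\}$ the matrix $N_j$ from Lemma \ref{lem2} really equals the transpose of the one displayed in the theorem statement.
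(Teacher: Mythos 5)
Your proposal is correct and follows essentially the same route as the paper's proof: both reduce the Rota--Baxter identity to basis pairs $(e_i,e_j)$ via Lemmas \ref{lem1} and \ref{lem2}, fix $j$ and bundle the four equations over $i$ into one $4\times4$ matrix identity, with the factorization $(PE_0\gamma_j,\dots,PE_3\gamma_j)=P(E_0,E_1,E_2,E_3)\gamma_j(4)$ doing the same work in both arguments (the paper just writes everything in transposed, row-vector form). Your one small variation --- obtaining the coordinates of $e_ie_j$, and hence the $\lambda P N_j^{T}$ term, by specializing Lemma \ref{lem2} to $\mathcal{P}=\id$ rather than invoking the structure constants directly --- is a tidy shortcut but not a different method.
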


\begin{proof}
	
	Using Lemma \ref{lem2}, we have
	$$
	\mathcal{P}(e_{i}\mathcal{P}(e_{j}))=\gamma_{j}^{T}E_{i}^{T}P^{T}\left(
	\begin{array}{c}
		e_{0}\\
		e_{1} \\
		e_{2} \\
		e_{3} \\
	\end{array}
	\right),
	$$
	$$
	\mathcal{P}(\mathcal{P}(e_{i})e_{1})=\gamma_{i}^{T}
	\left(
	\begin{array}{cccc}
		0 & 1 & 0 & 0 \\
		1 & 0 & 0 & 0 \\
		0 & 0 & 0 & -1 \\
		0 & 0 & -1 & 0 \\
	\end{array}
	\right)P^{T}\left(
	\begin{array}{c}
		e_{0}\\
		e_{1} \\
		e_{2} \\
		e_{3} \\
	\end{array}
	\right),
	$$
	$$
	\mathcal{P}(\mathcal{P}(e_{i})e_{2})=\gamma_{i}^{T}
	\left(
	\begin{array}{cccc}
		0 & 0 & 1 & 0 \\
		0 & 0 & 0 & 1 \\
		0& 0 & 0 & 0 \\
		0 & 0& 0 & 0 \\
	\end{array}
	\right)P^{T}\left(
	\begin{array}{c}
		e_{0}\\
		e_{1} \\
		e_{2} \\
		e_{3} \\
	\end{array}
	\right),
	$$
	$$
	\mathcal{P}(\mathcal{P}(e_{i})e_{3})=\gamma_{i}^{T}
	\left(
	\begin{array}{cccc}
		0 & 0 & 0 & 1 \\
		0 & 0 & 1 & 0 \\
		0 & 0 & 0 & 0 \\
		0 & 0 & 0 & 0 \\
	\end{array}
	\right)P^{T}\left(
	\begin{array}{c}
		e_{0}\\
		e_{1} \\
		e_{2} \\
		e_{3} \\
	\end{array}
	\right).
	$$
	$\mathcal{P}$ is a Rota-Baxter operator with weight $\lambda$ if and only if  for any $i, j = 0,1,2,3,$  the following equation holds:
	$$
	\mathcal{P}(e_{i})\mathcal{P}(e_{j})=\mathcal{P}(e_{i}\mathcal{P}(e_{j}))+\mathcal{P}(\mathcal{P}(e_{i})e_{j})+\lambda \mathcal{P}(e_{i}e_{j}).
	$$
	
	When $j=0$, from Lemma \ref{lem1}, one has
	$$
	\gamma_{i}^{T}C\left(
	\begin{array}{cccc}
		\gamma_{0}&  &  &  \\
		& \gamma_{0} &  &  \\
		& & \gamma_{0}&  \\
		&  &  &\gamma_{0}\\
	\end{array}
	\right)=\gamma_{0}^{T}E_{i}^{T}P^{T}+\gamma_{i}^{T}P^{T}+\lambda\gamma_{i}^{T},
	$$
	which is equivalent to the first equality in Theorem \ref{thm1}.
	
	When $j=1$,
	$$
	\gamma_{0}^{T}C\left(
	\begin{array}{cccc}
		\gamma_{1}&  &  &  \\
		& \gamma_{1} &  &  \\
		& & \gamma_{1}&  \\
		&  &  &\gamma_{1}\\
	\end{array}
	\right)=\gamma_{1}^{T}E_{0}P^{T}+\gamma_{0}^{T}\left(
	\begin{array}{cccc}
		0 & 1 & 0 & 0 \\
		1 & 0 & 0 & 0 \\
		0 & 0 & 0 & -1 \\
		0 & 0 & -1 & 0 \\
	\end{array}
	\right)P^{T}+\lambda\gamma_{1}^{T},
	$$
	$$
	\gamma_{1}^{T}C\left(
	\begin{array}{cccc}
		\gamma_{1}&  &  &  \\
		& \gamma_{1} &  &  \\
		& & \gamma_{1}&  \\
		&  &  &\gamma_{1}\\
	\end{array}
	\right)=\gamma_{1}^{T}E_{1}^{T}P^{T}+\gamma_{1}^{T}\left(
	\begin{array}{cccc}
		0 & 1 & 0 & 0 \\
		1 & 0 & 0 & 0 \\
		0 & 0 & 0 & -1 \\
		0 & 0 & -1 & 0 \\
	\end{array}
	\right)P^{T}+\lambda\gamma_{0}^{T},
	$$
	$$
	\gamma_{2}^{T}C\left(
	\begin{array}{cccc}
		\gamma_{1}&  &  &  \\
		& \gamma_{1} &  &  \\
		& & \gamma_{1}&  \\
		&  &  &\gamma_{1}\\
	\end{array}
	\right)=\gamma_{1}^{T}E_{2}^{T}P^{T}+\gamma_{2}^{T}\left(
	\begin{array}{cccc}
		0 & 1 & 0 & 0 \\
		1 & 0 & 0 & 0 \\
		0 & 0 & 0 & -1 \\
		0 & 0 & -1 & 0 \\
	\end{array}
	\right)P^{T}-\lambda\gamma_{3}^{T},
	$$
	$$
	\gamma_{3}^{T}C\left(
	\begin{array}{cccc}
		\gamma_{1}&  &  &  \\
		& \gamma_{1} &  &  \\
		& & \gamma_{1}&  \\
		&  &  &\gamma_{1}\\
	\end{array}
	\right)=\gamma_{1}^{T}E_{3}^{T}P^{T}+\gamma_{3}^{T}\left(
	\begin{array}{cccc}
		0 & 1 & 0 & 0 \\
		1 & 0 & 0 & 0 \\
		0 & 0 & 0 & -1 \\
		0 & 0 & -1 & 0 \\
	\end{array}
	\right)P^{T}-\lambda\gamma_{2}^{T},
	$$
	which is equivalent to the second equality in Theorem \ref{thm1}.
	
	When $j=2$,
	$$
	\gamma_{0}^{T}C\left(
	\begin{array}{cccc}
		\gamma_{2}&  &  &  \\
		& \gamma_{2} &  &  \\
		& & \gamma_{2}&  \\
		&  &  &\gamma_{2}\\
	\end{array}
	\right)=\gamma_{2}^{T}E_{0}P^{T}+\gamma_{0}^{T}\left(
	\begin{array}{cccc}
		0 & 0 & 1 & 0 \\
		0 & 0 & 0 & 1 \\
		0& 0 & 0 & 0 \\
		0 & 0& 0 & 0 \\
	\end{array}
	\right)P^{T}+\lambda\gamma_{2}^{T},
	$$
	$$
	\gamma_{1}^{T}C\left(
	\begin{array}{cccc}
		\gamma_{2}&  &  &  \\
		& \gamma_{2} &  &  \\
		& & \gamma_{2}&  \\
		&  &  &\gamma_{2}\\
	\end{array}
	\right)=\gamma_{2}^{T}E_{1}^{T}P^{T}+\gamma_{1}^{T}\left(
	\begin{array}{cccc}
		0 & 0 & 1 & 0 \\
		0 & 0 & 0 & 1 \\
		0& 0 & 0 & 0 \\
		0 & 0& 0 & 0 \\
	\end{array}
	\right)P^{T}+\lambda\gamma_{3}^{T},
	$$
	$$
	\gamma_{2}^{T}C\left(
	\begin{array}{cccc}
		\gamma_{2}&  &  &  \\
		& \gamma_{2} &  &  \\
		& & \gamma_{2}&  \\
		&  &  &\gamma_{2}\\
	\end{array}
	\right)=\gamma_{2}^{T}E_{2}^{T}P^{T}+\gamma_{2}^{T}\left(
	\begin{array}{cccc}
		0 & 0 & 1 & 0 \\
		0 & 0 & 0 & 1 \\
		0& 0 & 0 & 0 \\
		0 & 0& 0 & 0 \\
	\end{array}
	\right)P^{T},
	$$
	$$
	\gamma_{3}^{T}C\left(
	\begin{array}{cccc}
		\gamma_{2}&  &  &  \\
		& \gamma_{2} &  &  \\
		& & \gamma_{2}&  \\
		&  &  &\gamma_{2}\\
	\end{array}
	\right)=\gamma_{2}^{T}E_{3}^{T}P^{T}+\gamma_{3}^{T}\left(
	\begin{array}{cccc}
		0 & 0 & 1 & 0 \\
		0 & 0 & 0 & 1 \\
		0& 0 & 0 & 0 \\
		0 & 0& 0 & 0 \\
	\end{array}
	\right)P^{T},
	$$
	which is equivalent to the third equality in  Theorem \ref{thm1}.
	
	When $j=3$,
	$$
	\gamma_{0}^{T}C\left(
	\begin{array}{cccc}
		\gamma_{3}&  &  &  \\
		& \gamma_{3} &  &  \\
		& & \gamma_{3}&  \\
		&  &  &\gamma_{3}\\
	\end{array}
	\right)=\gamma_{3}^{T}E_{0}P^{T}+\gamma_{0}^{T}\left(
	\begin{array}{cccc}
		0 & 0 & 0 & 1 \\
		0 & 0 & 1 & 0 \\
		0 & 0 & 0 & 0 \\
		0 & 0 & 0 & 0 \\
	\end{array}
	\right)P^{T}+\lambda\gamma_{3}^{T},
	$$
	$$
	\gamma_{1}^{T}C\left(
	\begin{array}{cccc}
		\gamma_{3}&  &  &  \\
		& \gamma_{3} &  &  \\
		& & \gamma_{3}&  \\
		&  &  &\gamma_{3}\\
	\end{array}
	\right)=\gamma_{3}^{T}E_{1}^{T}P^{T}+\gamma_{1}^{T}\left(
	\begin{array}{cccc}
		0 & 0 & 0 & 1 \\
		0 & 0 & 1 & 0 \\
		0 & 0 & 0 & 0 \\
		0 & 0 & 0 & 0 \\
	\end{array}
	\right)P^{T}+\lambda\gamma_{2}^{T},
	$$
	$$
	\gamma_{2}^{T}C\left(
	\begin{array}{cccc}
		\gamma_{3}&  &  &  \\
		& \gamma_{3} &  &  \\
		& & \gamma_{3}&  \\
		&  &  &\gamma_{3}\\
	\end{array}
	\right)=\gamma_{3}^{T}E_{2}^{T}P^{T}+\gamma_{2}^{T}\left(
	\begin{array}{cccc}
		0 & 0 & 0 & 1 \\
		0 & 0 & 1 & 0 \\
		0 & 0 & 0 & 0 \\
		0 & 0 & 0 & 0 \\
	\end{array}
	\right)P^{T},
	$$
	$$
	\gamma_{3}^{T}C\left(
	\begin{array}{cccc}
		\gamma_{3}&  &  &  \\
		& \gamma_{3} &  &  \\
		& & \gamma_{3}&  \\
		&  &  &\gamma_{3}\\
	\end{array}
	\right)=\gamma_{3}^{T}E_{3}^{T}P^{T}+\gamma_{3}^{T}\left(
	\begin{array}{cccc}
		0 & 0 & 0 & 1 \\
		0 & 0 & 1 & 0 \\
		0 & 0 & 0 & 0 \\
		0 & 0 & 0 & 0 \\
	\end{array}
	\right)P^{T},
	$$
	which is equivalent to the fourth equality in Theorem \ref{thm1}.
\end{proof}
 
 \section{Rota-Baxter operators with weight 0 on $\mathbb{H}_{ss}$ }
 
 In this section, we shall describe all the Rota-Baxter operators with weight 0 on $\mathbb{H}_{ss}$. Firstly, we should find the all $P$ which satisfy the matrix equations in Theorem \ref{thm1}. It follows easily from Theorem \ref{thm1} that  the system of the matrix equations is equivalent to the system of equations 
 
\begin{equation}\label{ch1}\tag{a1}
-a_{11}^2+a_{21}^2-2 a_{12} a_{21}-2 a_{13} a_{31}-2 a_{14} a_{41}=0,
\end{equation}
\begin{equation}\label{ch2}\tag{a2}
-a_{11} a_{12}-a_{22} a_{12}-a_{11} a_{21}+a_{21} a_{22}-a_{14} a_{31}-a_{13} a_{32}-a_{13} a_{41}-a_{14} a_{42}=0,
\end{equation}
\begin{equation}\label{ch3}\tag{a3}
-a_{11} a_{13}-a_{33} a_{13}+a_{14} a_{21}-a_{12} a_{23}+a_{21} a_{23}-a_{14} a_{43}=0,
\end{equation}
\begin{equation}\label{ch4}\tag{a4}
-a_{11} a_{14}-a_{44} a_{14}+a_{13} a_{21}-a_{12} a_{24}+a_{21} a_{24}-a_{13} a_{34}=0,
	\end{equation}
	\begin{equation}\label{ch5}\tag{a5}
		-2 a_{21} a_{22}-2 a_{23} a_{31}-2 a_{24} a_{41}=0,
\end{equation}
\begin{equation}\label{ch6}\tag{a6}-a_{21}^2-a_{22}^2-a_{24} a_{31}-a_{23} a_{32}-a_{23} a_{41}-a_{24} a_{42}=0,
		\end{equation}
		\begin{equation}\label{ch7}\tag{a7}-a_{22} a_{23}-a_{33} a_{23}+a_{21} a_{24}-a_{24} a_{43}=0,
		\end{equation}
		\begin{equation}\label{ch8}\tag{a8}
			a_{21} a_{23}-a_{34} a_{23}-a_{22} a_{24}-a_{24} a_{44}=0,
			\end{equation}
			\begin{equation}\label{ch9}\tag{a9}
				-2 a_{21} a_{32}-2 a_{31} a_{33}-2 a_{34} a_{41}=0,
			\end{equation}
			\begin{equation}\label{ch10}\tag{a10}
				-a_{21} a_{31}-a_{34} a_{31}-a_{22} a_{32}-a_{32} a_{33}+a_{22} a_{41}-a_{33} a_{41}-a_{21} a_{42}-a_{34} a_{42}=0,
				\end{equation}
				\begin{equation}\label{ch11}\tag{a11}
					-a_{33}^2-a_{23} a_{32}+a_{21} a_{34}+a_{23} a_{41}-a_{21} a_{43}-a_{34} a_{43}=0,
					\end{equation}
					\begin{equation}\label{ch12}\tag{a12}-a_{24} a_{32}+a_{21} a_{33}-a_{33} a_{34}+a_{24} a_{41}-a_{21} a_{44}-a_{34} a_{44}=0,
					\end{equation}
					\begin{equation}\label{ch13}\tag{a13}-2 a_{21} a_{42}-2 a_{31} a_{43}-2 a_{41} a_{44}=0,
					\end{equation}
					\begin{equation}\label{ch14}\tag{a14}a_{22} a_{31}-a_{44} a_{31}-a_{21} a_{32}-a_{21} a_{41}-a_{22} a_{42}-a_{32} a_{43}-a_{41} a_{43}-a_{42} a_{44}=0,
					\end{equation}
					\begin{equation}\label{ch15}\tag{a15}a_{23} a_{31}-a_{21} a_{33}-a_{23} a_{42}-a_{33} a_{43}+a_{21} a_{44}-a_{43} a_{44}=0,
					\end{equation}
					\begin{equation}\label{ch16}\tag{a16}
						-a_{44}^2+a_{24} a_{31}-a_{21} a_{34}-a_{24} a_{42}+a_{21} a_{43}-a_{34} a_{43}=0,
						\end{equation}
						\begin{equation}\label{ch17}\tag{a17}-a_{11} a_{12}-a_{22} a_{12}-a_{11} a_{21}+a_{21} a_{22}+a_{14} a_{31}-a_{13} a_{32}+a_{13} a_{41}-a_{14} a_{42}=0,
						\end{equation}
						\begin{equation}\label{ch18}\tag{a18}-a_{12}^2+a_{22}^2-2 a_{11} a_{22}=0,
						\end{equation}
						\begin{equation}\label{ch19}\tag{a19}-a_{12} a_{13}+a_{43} a_{13}+a_{14} a_{22}-a_{11} a_{23}+a_{22} a_{23}+a_{14} a_{33}=0,
						\end{equation}\begin{equation}\label{ch20}\tag{a20}-a_{12} a_{14}+a_{34} a_{14}+a_{13} a_{22}-a_{11} a_{24}+a_{22} a_{24}+a_{13} a_{44}=0,
						\end{equation}
						\begin{equation}\label{ch21}\tag{a21}-a_{21}^2-a_{22}^2+a_{24} a_{31}-a_{23} a_{32}+a_{23} a_{41}-a_{24} a_{42}=0,
						\end{equation}
						\begin{equation}\label{ch22}\tag{a22}-2 a_{21} a_{22}=0,
						\end{equation}
						\begin{equation}\label{ch23}\tag{a23}-a_{21} a_{23}+a_{43} a_{23}+a_{22} a_{24}+a_{24} a_{33}=0,
						\end{equation}
						\begin{equation}\label{ch24}\tag{a24}a_{22} a_{23}+a_{44} a_{23}-a_{21} a_{24}+a_{24} a_{34}=0,
						\end{equation}
						\begin{equation}\label{ch25}\tag{a25}-a_{21} a_{31}+a_{34} a_{31}-a_{22} a_{32}-a_{32} a_{33}-a_{22} a_{41}+a_{33} a_{41}+a_{21} a_{42}-a_{34} a_{42}=0,
						\end{equation}
						\begin{equation}\label{ch26}\tag{a26}-2 a_{22} a_{31}=0,
						\end{equation}
						\begin{equation}\label{ch27}\tag{a27}-a_{23} a_{31}+a_{22} a_{34}+a_{33} a_{34}+a_{23} a_{42}-a_{22} a_{43}+a_{33} a_{43}=0,
						\end{equation}
						\begin{equation}\label{ch28}\tag{a28}
							a_{34}^2-a_{24} a_{31}+a_{22} a_{33}+a_{24} a_{42}-a_{22} a_{44}+a_{33} a_{44}=0,
							\end{equation}
							\begin{equation}\label{ch29}\tag{a29}-a_{22} a_{31}+a_{44} a_{31}+a_{21} a_{32}-a_{21} a_{41}-a_{22} a_{42}-a_{32} a_{43}+a_{41} a_{43}-a_{42} a_{44}=0,
							\end{equation}
							\begin{equation}\label{ch30}\tag{a30}-2 a_{22} a_{41}=0,
							\end{equation}
							\begin{equation}\label{ch31}\tag{a31}a_{43}^2+a_{23} a_{32}-a_{22} a_{33}-a_{23} a_{41}+a_{22} a_{44}+a_{33} a_{44}=0,
							\end{equation}
							\begin{equation}\label{ch32}\tag{a32}a_{24} a_{32}-a_{22} a_{34}-a_{24} a_{41}+a_{22} a_{43}+a_{34} a_{44}+a_{43} a_{44}=0,
							\end{equation}
							\begin{equation}\label{ch33}\tag{a33}-a_{11} a_{13}-a_{33} a_{13}-a_{14} a_{21}-a_{12} a_{23}+a_{21} a_{23}-a_{14} a_{43}=0,
							\end{equation}
							\begin{equation}\label{ch34}\tag{a34}-a_{12} a_{13}-a_{43} a_{13}-a_{14} a_{22}-a_{11} a_{23}+a_{22} a_{23}-a_{14} a_{33}=0,
							\end{equation}
							\begin{equation}\label{ch35}\tag{a35}a_{23}^2-a_{13}^2=0,
							\end{equation}
							\begin{equation}\label{ch36}\tag{a36}-a_{13} a_{14}-a_{24} a_{14}+a_{13} a_{23}+a_{23} a_{24}=0,
							\end{equation}\begin{equation}\label{ch37}\tag{a37}-a_{22} a_{23}-a_{33} a_{23}-a_{21} a_{24}-a_{24} a_{43}=0,
							\end{equation}
							\begin{equation}\label{ch38}\tag{a38}-a_{21} a_{23}-a_{43} a_{23}-a_{22} a_{24}-a_{24} a_{33}=0,
							\end{equation}
							\begin{equation}\label{ch39}\tag{a39}a_{23}^2-a_{24}^2=0,
							\end{equation}
							\begin{equation}\label{ch40}\tag{a40}-a_{33}^2-a_{23} a_{32}-a_{21} a_{34}-a_{23} a_{41}+a_{21} a_{43}-a_{34} a_{43}=0,
							\end{equation}
							\begin{equation}\label{ch41}\tag{a41}-a_{23} a_{31}-a_{22} a_{34}-a_{33} a_{34}-a_{23} a_{42}+a_{22} a_{43}-a_{33} a_{43}=0,
							\end{equation}
							\begin{equation}\label{ch42}\tag{a42}a_{23} a_{33}-a_{24} a_{34}+a_{24} a_{43}-a_{23} a_{44}=0,
							\end{equation}
							\begin{equation}\label{ch43}\tag{a43}-a_{23} a_{31}+a_{21} a_{33}-a_{23} a_{42}-a_{33} a_{43}-a_{21} a_{44}-a_{43} a_{44}=0,
							\end{equation}
							\begin{equation}\label{ch44}\tag{a44}-a_{43}^2-a_{23} a_{32}+a_{22} a_{33}-a_{23} a_{41}-a_{22} a_{44}-a_{33} a_{44}=0,
							\end{equation}
							\begin{equation}\label{ch45}\tag{a45}a_{24} a_{33}-a_{23} a_{34}+a_{23} a_{43}-a_{24} a_{44}=0,
							\end{equation}
							\begin{equation}\label{ch46}\tag{a46}-a_{11} a_{14}-a_{44} a_{14}-a_{13} a_{21}-a_{12} a_{24}+a_{21} a_{24}-a_{13} a_{34}=0,
							\end{equation}
							\begin{equation}\label{ch47}\tag{a47}-a_{12} a_{14}-a_{34} a_{14}-a_{13} a_{22}-a_{11} a_{24}+a_{22} a_{24}-a_{13} a_{44}=0,
							\end{equation}
							\begin{equation}\label{ch48}\tag{a48}-a_{13} a_{14}+a_{24} a_{14}-a_{13} a_{23}+a_{23} a_{24}=0,
							\end{equation}
							\begin{equation}\label{ch49}\tag{a49}a_{24}^2-a_{14}^2=0,
							\end{equation}
							\begin{equation}\label{ch50}\tag{a50}-a_{21} a_{23}-a_{34} a_{23}-a_{22} a_{24}-a_{24} a_{44}=0,
							\end{equation}
							\begin{equation}\label{ch51}\tag{a51}-a_{22} a_{23}-a_{44} a_{23}-a_{21} a_{24}-a_{24} a_{34}=0,
							\end{equation}
							\begin{equation}\label{ch52}\tag{a52}a_{24}^2-a_{23}^2=0,
							\end{equation}
							\begin{equation}\label{ch53}\tag{a53}-a_{24} a_{32}-a_{21} a_{33}-a_{33} a_{34}-a_{24} a_{41}+a_{21} a_{44}-a_{34} a_{44}=0,
							\end{equation}
							\begin{equation}\label{ch54}\tag{a54}-a_{34}^2-a_{24} a_{31}-a_{22} a_{33}-a_{24} a_{42}+a_{22} a_{44}-a_{33} a_{44}=0,
							\end{equation}
							\begin{equation}\label{ch55}\tag{a55}-a_{23} a_{33}+a_{24} a_{34}-a_{24} a_{43}+a_{23} a_{44}=0,
							\end{equation}
							\begin{equation}\label{ch56}\tag{a56}-a_{44}^2-a_{24} a_{31}+a_{21} a_{34}-a_{24} a_{42}-a_{21} a_{43}-a_{34} a_{43}=0,
							\end{equation}
							\begin{equation}\label{ch57}\tag{a57}-a_{24} a_{32}+a_{22} a_{34}-a_{24} a_{41}-a_{22} a_{43}-a_{34} a_{44}-a_{43} a_{44}=0,
							\end{equation}
							\begin{equation}\label{ch58}\tag{a58}-a_{24} a_{33}+a_{23} a_{34}-a_{23} a_{43}+a_{24} a_{44}=0.
							\end{equation}

It is difficult to finding all solutions for the system of equations (\ref{ch1})-(\ref{ch58}) by using  manual methods. With the help of scientific computation software-\textbf{Mathematica}, by inputting the order 
\textbf{Reduce[$(\text{a1}),(\text{a2}),\cdots,(\text{a58}), \{a_{11},a_{12},\cdots,\cdots,a_{43},a_{44}\}$]}, we can get the main result in this section. 

\begin{theorem}	\label{k2}$\mathcal{P}$ is a Rota-Baxter operator of weight 0 on $\mathbb{H}_{ss}$ if and only if  $P$ is  one of the following matrices 
	$$
	\left(
	\begin{array}{cccc}
		0 & 0 & 0 & 0 \\
		0 & 0 & 0 & 0 \\
		a & b & 0 & 0 \\
		c & d & 0 & 0 \\
	\end{array}
	\right), \left(
	\begin{array}{cccc}
		0 & 0 & 0 & 0 \\
		0 & 0 & 0 & 0 \\
		a & b & c & -c \\
		a & b & c & -c \\
	\end{array}
	\right),\left(
	\begin{array}{cccc}
		0 & 0 & 0 & 0 \\
		0 & 0 & 0 & 0 \\
		a & b & c & c \\
		-a & -b & -c & -c \\
	\end{array}
	\right),
	$$
	$$
	\left(
	\begin{array}{cccc}
		0 & 0 & a & -a \\
		0 & 0 & -a & a \\
		0 & 0 & b & -b \\
		0 & 0 & b & -b \\
	\end{array}
	\right),\left(
	\begin{array}{cccc}
		0 & 0 & a & -a \\
		0 & 0 & a & -a \\
		0 & 0 & b & -b \\
		0 & 0 & b & -b \\
	\end{array}
	\right),\left(
	\begin{array}{cccc}
		0 & 0 & a & a \\
		0 & 0 & -a & -a \\
		0 & 0 & b & b \\
		0 & 0 & -b & -b \\
	\end{array}
	\right),
	$$
	$$
	\left(
	\begin{array}{cccc}
		0 & 0 & a & a \\
		0 & 0 & a & a \\
		0 & 0 & b & b \\
		0 & 0 & -b & -b \\
	\end{array}
	\right),
\left(
	\begin{array}{cccc}
		0 & a & b & -b \\
		0 & -a & -b & b \\
		0 & c & \frac{b c}{a} & -\frac{b c}{a} \\
		0 & c-\frac{a^2}{b} & \frac{b c}{a}-a & a-\frac{b c}{a} \\
	\end{array}
	\right),
	$$
	$$
	\left(
	\begin{array}{cccc}
		0 & a & b & -b \\
		0 & a & b & -b \\
		0 & c & \frac{b c}{a} & -\frac{b c}{a} \\
		0 & \frac{a^2}{b}+c & \frac{b c}{a}+a & -\frac{a^2+b c}{a} \\
	\end{array}
	\right),\left(
	\begin{array}{cccc}
		0 & a & b & b \\
		0 & -a & -b & -b \\
		0 & c & \frac{b c}{a} & \frac{b c}{a} \\
		0 & \frac{a^2}{b}-c & a-\frac{b c}{a} & a-\frac{b c}{a} \\
	\end{array}
	\right),
	$$
	$$
	\left(
	\begin{array}{cccc}
		0 & a & b & b \\
		0 & a & b & b \\
		0 & c & \frac{b c}{a} & \frac{b c}{a} \\
		0 & -\frac{a^2+b c}{b} & -\frac{a^2+b c}{a} & -\frac{a^2+b c}{a} \\
	\end{array}
	\right),
	$$
where $a,b,c$ are parameters.
\end{theorem}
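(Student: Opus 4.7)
The plan is to specialize Theorem \ref{thm1} to $\lambda=0$, extract an equivalent system of scalar polynomial equations in the $16$ entries $a_{ij}$ of $P$, and then solve that system by a structured case analysis. Concretely, I would expand each of the four matrix identities of Theorem \ref{thm1} columnwise, reading off one scalar equation per coordinate; this produces the 58 quadratic equations (\ref{ch1})--(\ref{ch58}). Every equation has degree at most two, and a substantial number of them are pure monomials, which is what makes the system tractable.

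The key to organizing the case analysis is the collection of very simple relations that force immediate splittings. Equations (\ref{ch22}), (\ref{ch26}), (\ref{ch30}) read $a_{21}a_{22}=a_{22}a_{31}=a_{22}a_{41}=0$, so the first split is on whether $a_{22}=0$ or $a_{22}\neq 0$ (in the latter case $a_{21}=a_{31}=a_{41}=0$, and (\ref{ch18}) then forces $a_{12}^{2}=a_{22}^{2}-2a_{11}a_{22}$, dramatically restricting the top two rows). A second split is driven by (\ref{ch35}), (\ref{ch39}), (\ref{ch49}), (\ref{ch52}), which give $a_{13}^{2}=a_{14}^{2}=a_{23}^{2}=a_{24}^{2}$, producing finitely many sign patterns for the quadruple $(a_{13},a_{14},a_{23},a_{24})$. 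With these splittings in place, the remaining equations become linear in most unknowns; each branch is solved, with degenerate sub-cases (such as $a=0$ in the families containing $\tfrac{bc}{a}$) recorded separately, and the output is collected into the eleven parametric families listed in the theorem.

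The main obstacle is purely combinatorial bookkeeping: the branches generated by the sign splits and the $a_{22}$-split combine into dozens of sub-systems, and one must verify both that every sub-system has been handled and that no solution family has been duplicated across branches. This is precisely why the authors delegate the enumeration to the \textbf{Reduce} command in \textbf{Mathematica}, which performs the corresponding polynomial quantifier elimination and returns a disjunction equivalent to the eleven listed normal forms. To close the argument I would finish with the easy direction: for each of the eleven candidate matrices, directly verify that (\ref{c1}) holds on all pairs of basis elements $e_{i},e_{j}$ by substituting into the four matrix identities of Theorem \ref{thm1} with $\lambda=0$; since each family has at most three free parameters, this check reduces to a handful of symbolic identities per family.
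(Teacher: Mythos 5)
Your proposal follows essentially the same route as the paper: specialize Theorem \ref{thm1} to $\lambda=0$, expand it into the scalar quadratic system (\ref{ch1})--(\ref{ch58}), and obtain the eleven parametric families by solving that system, with the enumeration delegated to \textbf{Reduce} in \textbf{Mathematica} exactly as the authors do. Your additional remarks (the case splits driven by (\ref{ch22}), (\ref{ch26}), (\ref{ch30}), (\ref{ch35}), (\ref{ch39}), (\ref{ch49}), (\ref{ch52}), and the direct back-substitution check of sufficiency) are sound refinements but do not change the method.
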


 \section{Rota-Baxter operators with non-zero weight $\lambda$ on $\mathbb{H}_{ss}$ }
 
 In this section, we shall describe all Rota-Baxter operators with non-zero weight on $\mathbb{H}_{ss}$. It follows easily from Theorem \ref{thm1} that  the system of the matrix equations is equivalent to the system of equations 
  \begin{equation}
  	\label{q1}\tag{b1}
 -a_{11}^2+a_{21}^2-2 a_{12} a_{21}-2 a_{13} a_{31}-2 a_{14} a_{41}-\lambda  a _{11}=0, 
\end{equation}
\begin{equation}\label{q2}\tag{b2}-a_{11} a_{12}-a_{22} a_{12}-a_{11} a_{21}+a_{21} a_{22}-a_{14} a_{31}-a_{13} a_{32}-a_{13} a_{41}-a_{14} a_{42}-\lambda  a _{12}=0, 
\end{equation}
\begin{equation}\label{q3}\tag{b3}-a_{11} a_{13}-a_{33} a_{13}+a_{14} a_{21}-a_{12} a_{23}+a_{21} a_{23}-a_{14} a_{43}-\lambda  a _{13}=0, \end{equation}
\begin{equation}\label{q4}\tag{b4}-a_{11} a_{14}-a_{44} a_{14}+a_{13} a_{21}-a_{12} a_{24}+a_{21} a_{24}-a_{13} a_{34}-\lambda  a _{14}=0, \end{equation}
\begin{equation}\label{q5}\tag{b5}-2 a_{21} a_{22}-2 a_{23} a_{31}-2 a_{24} a_{41}-\lambda  a _{21}=0, \end{equation}
\begin{equation}\label{q6}\tag{b6}-a_{21}^2-a_{22}^2-a_{24} a_{31}-a_{23} a_{32}-a_{23} a_{41}-a_{24} a_{42}-\lambda  a _{22}=0,
 \end{equation}
\begin{equation}\label{q7}\tag{b7}-a_{22} a_{23}-a_{33} a_{23}+a_{21} a_{24}-a_{24} a_{43}-\lambda  a _{23}=0, \end{equation}
\begin{equation}\label{q8}\tag{b8}a_{21} a_{23}-a_{34} a_{23}-a_{22} a_{24}-a_{24} a_{44}-\lambda  a _{24}=0, \end{equation}
\begin{equation}\label{q9}\tag{b9}-2 a_{21} a_{32}-2 a_{31} a_{33}-2 a_{34} a_{41}-\lambda  a _{31}=0, \end{equation}
\begin{equation}\label{q10}\tag{b10}-a_{21} a_{31}-a_{34} a_{31}-a_{22} a_{32}-a_{32} a_{33}+a_{22} a_{41}-a_{33} a_{41}-a_{21} a_{42}-a_{34} a_{42}-\lambda  a _{32}=0, \end{equation}
\begin{equation}\label{q11}\tag{b11}-a_{33}^2-a_{23} a_{32}+a_{21} a_{34}+a_{23} a_{41}-a_{21} a_{43}-a_{34} a_{43}-\lambda  a _{33}=0, \end{equation}
\begin{equation}\label{q12}\tag{b12}-a_{24} a_{32}+a_{21} a_{33}-a_{33} a_{34}+a_{24} a_{41}-a_{21} a_{44}-a_{34} a_{44}-\lambda  a _{34}=0, \end{equation}
\begin{equation}\label{q13}\tag{b13}-2 a_{21} a_{42}-2 a_{31} a_{43}-2 a_{41} a_{44}-\lambda  a _{41}=0, \end{equation}
\begin{equation}\label{q14}\tag{b14}a_{22} a_{31}-a_{44} a_{31}-a_{21} a_{32}-a_{21} a_{41}-a_{22} a_{42}-a_{32} a_{43}-a_{41} a_{43}-a_{42} a_{44}-\lambda  a _{42}=0, \end{equation}
\begin{equation}\label{q15}\tag{b15}a_{23} a_{31}-a_{21} a_{33}-a_{23} a_{42}-a_{33} a_{43}+a_{21} a_{44}-a_{43} a_{44}-\lambda  a _{43}=0, \end{equation}
\begin{equation}\label{q16}\tag{b16}-a_{44}^2+a_{24} a_{31}-a_{21} a_{34}-a_{24} a_{42}+a_{21} a_{43}-a_{34} a_{43}-\lambda  a _{44}=0, \end{equation}
\begin{equation}\label{q17}\tag{b17}-a_{11} a_{12}-a_{22} a_{12}-a_{11} a_{21}+a_{21} a_{22}+a_{14} a_{31}-a_{13} a_{32}+a_{13} a_{41}-a_{14} a_{42}-\lambda  a _{12}=0, \end{equation}
\begin{equation}\label{q18}\tag{b18}-a_{12}^2+a_{22}^2-2 a_{11} a_{22}-\lambda  a _{11}=0, \end{equation}
\begin{equation}\label{q19}\tag{b19}-a_{12} a_{13}+a_{43} a_{13}+a_{14} a_{22}-a_{11} a_{23}+a_{22} a_{23}+a_{14} a_{33}+\lambda  a _{14}=0, \end{equation}
\begin{equation}\label{q20}\tag{b20}-a_{12} a_{14}+a_{34} a_{14}+a_{13} a_{22}-a_{11} a_{24}+a_{22} a_{24}+a_{13} a_{44}+\lambda  a _{13}=0, \end{equation}
\begin{equation}\label{q21}\tag{b21}-a_{21}^2-a_{22}^2+a_{24} a_{31}-a_{23} a_{32}+a_{23} a_{41}-a_{24} a_{42}-\lambda  a _{22}=0, \end{equation}
\begin{equation}\label{q22}\tag{b22}-2 a_{21} a_{22}-\lambda  a _{21}=0, \end{equation}
\begin{equation}\label{q23}\tag{b23}-a_{21} a_{23}+a_{43} a_{23}+a_{22} a_{24}+a_{24} a_{33}+\lambda  a _{24}=0, \end{equation}
\begin{equation}\label{q24}\tag{b24}a_{22} a_{23}+a_{44} a_{23}-a_{21} a_{24}+a_{24} a_{34}+\lambda  a _{23}=0, \end{equation}
\begin{equation}\label{q25}\tag{b25}-a_{21} a_{31}+a_{34} a_{31}-a_{22} a_{32}-a_{32} a_{33}-a_{22} a_{41}+a_{33} a_{41}+a_{21} a_{42}-a_{34} a_{42}-\lambda  a _{32}=0, \end{equation}
\begin{equation}\label{q26}\tag{b26}-2 a_{22} a_{31}-\lambda  a _{31}=0, \end{equation}
\begin{equation}\label{q27}\tag{b27}-a_{23} a_{31}+a_{22} a_{34}+a_{33} a_{34}+a_{23} a_{42}-a_{22} a_{43}+a_{33} a_{43}+\lambda  a _{34}=0, \end{equation}
\begin{equation}\label{q28}\tag{b28}a_{34}^2-a_{24} a_{31}+a_{22} a_{33}+a_{24} a_{42}-a_{22} a_{44}+a_{33} a_{44}+\lambda  a _{33}=0, \end{equation}
\begin{equation}\label{q29}\tag{b29}-a_{22} a_{31}+a_{44} a_{31}+a_{21} a_{32}-a_{21} a_{41}-a_{22} a_{42}-a_{32} a_{43}+a_{41} a_{43}-a_{42} a_{44}-\lambda  a _{42}=0,
 \end{equation}
\begin{equation}\label{q30}\tag{b30}-2 a_{22} a_{41}-\lambda  a _{41}=0, \end{equation}
\begin{equation}\label{q31}\tag{b31}a_{43}^2+a_{23} a_{32}-a_{22} a_{33}-a_{23} a_{41}+a_{22} a_{44}+a_{33} a_{44}+\lambda  a _{44}=0, \end{equation}
\begin{equation}\label{q32}\tag{b32}a_{24} a_{32}-a_{22} a_{34}-a_{24} a_{41}+a_{22} a_{43}+a_{34} a_{44}+a_{43} a_{44}+\lambda  a _{43}=0, \end{equation}
\begin{equation}\label{q33}\tag{b33}-a_{11} a_{13}-a_{33} a_{13}-a_{14} a_{21}-a_{12} a_{23}+a_{21} a_{23}-a_{14} a_{43}-\lambda  a _{13}=0, \end{equation}
\begin{equation}\label{q34}\tag{b34}-a_{12} a_{13}-a_{43} a_{13}-a_{14} a_{22}-a_{11} a_{23}+a_{22} a_{23}-a_{14} a_{33}-\lambda  a _{14}=0, \end{equation}
\begin{equation}\label{q35}\tag{b35}a_{35}^2-a_{13}^2=0, \end{equation}\begin{equation}\label{q36}\tag{b36}-a_{13} a_{14}-a_{24} a_{14}+a_{13} a_{23}+a_{23} a_{24}=0,
 \end{equation}
\begin{equation}\label{q37}\tag{b37}-a_{22} a_{23}-a_{33} a_{23}-a_{21} a_{24}-a_{24} a_{43}-\lambda  a _{23}=0,
 \end{equation}
\begin{equation}\label{q38}\tag{b38}-a_{21} a_{23}-a_{43} a_{23}-a_{22} a_{24}-a_{24} a_{33}-\lambda  a _{24}=0, \end{equation}
\begin{equation}\label{q39}\tag{b39}a_{23}^2-a_{24}^2=0,
 \end{equation}
\begin{equation}\label{q40}\tag{b40}-a_{33}^2-a_{23} a_{32}-a_{21} a_{34}-a_{23} a_{41}+a_{21} a_{43}-a_{34} a_{43}-\lambda  a _{33}=0, \end{equation}
\begin{equation}\label{q41}\tag{b41}-a_{23} a_{31}-a_{22} a_{34}-a_{33} a_{34}-a_{23} a_{42}+a_{22} a_{43}-a_{33} a_{43}-\lambda  a _{34}=0, \end{equation}
\begin{equation}\label{q42}\tag{b42}a_{23} a_{33}-a_{24} a_{34}+a_{24} a_{43}-a_{23} a_{44}=0, \end{equation}
\begin{equation}\label{q43}\tag{b43}-a_{23} a_{31}+a_{21} a_{33}-a_{23} a_{42}-a_{33} a_{43}-a_{21} a_{44}-a_{43} a_{44}-\lambda  a _{43}=0, \end{equation}
\begin{equation}\label{q44}\tag{b44}-a_{43}^2-a_{23} a_{32}+a_{22} a_{33}-a_{23} a_{41}-a_{22} a_{44}-a_{33} a_{44}-\lambda  a _{44}=0, \end{equation}
\begin{equation}\label{q45}\tag{b45}a_{24} a_{33}-a_{23} a_{34}+a_{23} a_{43}-a_{24} a_{44}=0, \end{equation}
\begin{equation}\label{q46}\tag{b46}-a_{11} a_{14}-a_{44} a_{14}-a_{13} a_{21}-a_{12} a_{24}+a_{21} a_{24}-a_{13} a_{34}-\lambda  a _{14}=0, \end{equation}
\begin{equation}\label{q47}\tag{b47}-a_{12} a_{14}-a_{34} a_{14}-a_{13} a_{22}-a_{11} a_{24}+a_{22} a_{24}-a_{13} a_{44}-\lambda  a _{13}=0, \end{equation}
\begin{equation}\label{q48}\tag{b48}-a_{13} a_{14}+a_{24} a_{14}-a_{13} a_{23}+a_{23} a_{24}=0, \end{equation}
\begin{equation}\label{q49}\tag{b49}a_{24}^2-a_{14}^2=0, \end{equation}
\begin{equation}\label{q50}\tag{b50}-a_{21} a_{23}-a_{34} a_{23}-a_{22} a_{24}-a_{24} a_{44}-\lambda  a _{24}=0, \end{equation}
\begin{equation}\label{q51}\tag{b51}-a_{22} a_{23}-a_{44} a_{23}-a_{21} a_{24}-a_{24} a_{34}-\lambda  a _{23}=0, \end{equation}
\begin{equation}\label{q52}\tag{b52}a_{24}^2-a_{23}^2=0, \end{equation}
\begin{equation}\label{q53}\tag{b53}-a_{24} a_{32}-a_{21} a_{33}-a_{33} a_{34}-a_{24} a_{41}+a_{21} a_{44}-a_{34} a_{44}-\lambda  a _{34}=0, \end{equation}
\begin{equation}\label{q54}\tag{b54}-a_{34}^2-a_{24} a_{31}-a_{22} a_{33}-a_{24} a_{42}+a_{22} a_{44}-a_{33} a_{44}-\lambda  a _{33}=0, \end{equation}
\begin{equation}\label{q55}\tag{b55}-a_{23} a_{33}+a_{24} a_{34}-a_{24} a_{43}+a_{23} a_{44}=0, \end{equation}
\begin{equation}\label{q56}\tag{b56}-a_{44}^2-a_{24} a_{31}+a_{21} a_{34}-a_{24} a_{42}-a_{21} a_{43}-a_{34} a_{43}-\lambda  a _{44}=0, \end{equation}
\begin{equation}\label{q57}\tag{b57}-a_{24} a_{32}+a_{22} a_{34}-a_{24} a_{41}-a_{22} a_{43}-a_{34} a_{44}-a_{43} a_{44}-\lambda  a _{43}=0, \end{equation}
\begin{equation}\label{q58}\tag{b58}-a_{24} a_{33}+a_{23} a_{34}-a_{23} a_{43}+a_{24} a_{44}=0.\end{equation}

By inputting the order 
\textbf{Reduce[$(\text{b1}),(\text{b2}),\cdots,(\text{b58}),\{a_{11},a_{12},\cdots,\cdots,a_{43},a_{44}\}$]}, we can get the main result in this section.

\begin{theorem}\label{k1} 	$\mathcal{P}$ is a Rota-Baxter operator of non-zero weight $\lambda$ on $\mathbb{H}_{ss}$ if and only if  $P$ is  one of the following matrices
	$$
	\left(
	\begin{array}{cccc}
		0 & -\lambda  & 0 & 0 \\
		0 & -\lambda  & 0 & 0 \\
		0 & a & 0 & 0 \\
		0 & b & 0 & 0 \\
	\end{array}
	\right),\left(
	\begin{array}{cccc}
		0 & \lambda  & 0 & 0 \\
		0 & -\lambda  & 0 & 0 \\
		0 & a & 0 & 0 \\
		0 & b & 0 & 0 \\
	\end{array}
	\right),\left(
	\begin{array}{cccc}
		-\lambda  & 0 & 0 & 0 \\
		0 & -\lambda  & 0 & 0 \\
		0 & a & 0 & 0 \\
		0 & b & 0 & 0 \\
	\end{array}
	\right),
	$$
	$$
	\left(
	\begin{array}{cccc}
		0 & 0 & 0 & 0 \\
		0 & 0 & 0 & 0 \\
		0 & a & -\lambda  & 0 \\
		0 & b & 0 & -\lambda  \\
	\end{array}
	\right),\left(
	\begin{array}{cccc}
		-\lambda  & -\lambda  & 0 & 0 \\
		0 & 0 & 0 & 0 \\
		0 & a & -\lambda  & 0 \\
		0 & b & 0 & -\lambda  \\
	\end{array}
	\right),\left(
	\begin{array}{cccc}
		-\lambda  & \lambda  & 0 & 0 \\
		0 & 0 & 0 & 0 \\
		0 & a & -\lambda  & 0 \\
		0 & b & 0 & -\lambda  \\
	\end{array}
	\right),
	$$
$$
\left(
\begin{array}{cccc}
	0 & -\lambda  & 0 & 0 \\
	0 & -\lambda  & 0 & 0 \\
	0 & 0 & -\lambda  & 0 \\
	0 & a & -\lambda  & 0 \\
\end{array}
\right),\left(
\begin{array}{cccc}
	0 & \lambda  & 0 & 0 \\
	0 & -\lambda  & 0 & 0 \\
	0 & 0 & -\lambda  & 0 \\
	0 & a & -\lambda  & 0 \\
\end{array}
\right),\left(
\begin{array}{cccc}
	0 & -\lambda  & 0 & 0 \\
	0 & -\lambda  & 0 & 0 \\
	0 & 0 & -\lambda  & 0 \\
	0 & a & \lambda  & 0 \\
\end{array}
\right),\left(
\begin{array}{cccc}
	0 & \lambda  & 0 & 0 \\
	0 & -\lambda  & 0 & 0 \\
	0 & 0 & -\lambda  & 0 \\
	0 & a & \lambda  & 0 \\
\end{array}
\right),
$$
	$$
\left(
\begin{array}{cccc}
	-\lambda  & 0 & 0 & 0 \\
	0 & -\lambda  & 0 & 0 \\
	0 & 0 & -\lambda  & 0 \\
	0 & a & -\lambda  & 0 \\
\end{array}
\right),\left(
\begin{array}{cccc}
	-\lambda  & 0 & 0 & 0 \\
	0 & -\lambda  & 0 & 0 \\
	0 & 0 & -\lambda  & 0 \\
	0 & a & \lambda  & 0 \\
\end{array}
\right),
\left(
\begin{array}{cccc}
	0 & 0 & 0 & 0 \\
	0 & 0 & 0 & 0 \\
	0 & 0 & 0 & 0 \\
	0 & a & -\lambda  & -\lambda  \\
\end{array}
\right),\left(
\begin{array}{cccc}
	0 & 0 & 0 & 0 \\
	0 & 0 & 0 & 0 \\
	0 & 0 & 0 & 0 \\
	0 & a & \lambda  & -\lambda  \\
\end{array}
\right),	
$$
$$
\left(
\begin{array}{cccc}
	-\lambda  & -\lambda  & 0 & 0 \\
	0 & 0 & 0 & 0 \\
	0 & 0 & 0 & 0 \\
	0 & a & -\lambda  & -\lambda  \\
\end{array}
\right),\left(
\begin{array}{cccc}
	-\lambda  & \lambda  & 0 & 0 \\
	0 & 0 & 0 & 0 \\
	0 & 0 & 0 & 0 \\
	0 & a & -\lambda  & -\lambda  \\
\end{array}
\right),\left(
\begin{array}{cccc}
	-\lambda  & -\lambda  & 0 & 0 \\
	0 & 0 & 0 & 0 \\
	0 & 0 & 0 & 0 \\
	0 & a & \lambda  & -\lambda  \\
\end{array}
\right),\left(
\begin{array}{cccc}
	-\lambda  & \lambda  & 0 & 0 \\
	0 & 0 & 0 & 0 \\
	0 & 0 & 0 & 0 \\
	0 & a & \lambda  & -\lambda  \\
\end{array}
\right),
$$	
	$$
	\left(
	\begin{array}{cccc}
		0 & -b & a & -a \\
		0 & b & -a & a \\
		0 & \frac{(b-c) (b+\lambda )}{a} & -b+c-\lambda  & b-c \\
		0 & -\frac{c (b+\lambda )}{a} & c & -c-\lambda  \\
	\end{array}
	\right),\left(
	\begin{array}{cccc}
		0 & -b & a & -a \\
		0 & b & -a & a \\
		0 & \frac{b (b-c+\lambda )}{a} & -b+c-\lambda  & b-c+\lambda  \\
		0 & -\frac{b c}{a} & c & -c \\
	\end{array}
	\right),
	$$
	$$
	\left(
	\begin{array}{cccc}
		-\lambda  & -b-\lambda  & a & -a \\
		0 & b & -a & a \\
		0 & \frac{(b-c) (b+\lambda )}{a} & -b+c-\lambda  & b-c \\
		0 & -\frac{c (b+\lambda )}{a} & c & -c-\lambda  \\
	\end{array}
	\right),\left(
	\begin{array}{cccc}
		-\lambda  & -b-\lambda  & a & -a \\
		0 & b & -a & a \\
		0 & \frac{b (b-c+\lambda )}{a} & -b+c-\lambda  & b-c+\lambda  \\
		0 & -\frac{b c}{a} & c & -c \\
	\end{array}
	\right),
	$$
	$$
	\left(
	\begin{array}{cccc}
		0 & b & a & -a \\
		0 & b & a & -a \\
		0 & -\frac{(b-c) (b+\lambda )}{a} & -b+c-\lambda  & b-c \\
		0 & \frac{c (b+\lambda )}{a} & c & -c-\lambda  \\
	\end{array}
	\right),\left(
	\begin{array}{cccc}
		0 & b & a & -a \\
		0 & b & a & -a \\
		0 & -\frac{b (b-c+\lambda )}{a} & -b+c-\lambda  & b-c+\lambda  \\
		0 & \frac{b c}{a} & c & -c \\
	\end{array}
	\right),
	$$
	$$
	\left(
	\begin{array}{cccc}
		-\lambda  & b+\lambda  & a & -a \\
		0 & b & a & -a \\
		0 & -\frac{(b-c) (b+\lambda )}{a} & -b+c-\lambda  & b-c \\
		0 & \frac{c (b+\lambda )}{a} & c & -c-\lambda  \\
	\end{array}
	\right),\left(
	\begin{array}{cccc}
		-\lambda  & b+\lambda  & a & -a \\
		0 & b & a & -a \\
		0 & -\frac{b (b-c+\lambda )}{a} & -b+c-\lambda  & b-c+\lambda  \\
		0 & \frac{b c}{a} & c & -c \\
	\end{array}
	\right),
	$$
	$$
	\left(
	\begin{array}{cccc}
		0 & -b & a & a \\
		0 & b & -a & -a \\
		0 & \frac{(b+c) (b+\lambda )}{a} & -b-c-\lambda  & -b-c \\
		0 & -\frac{c (b+\lambda )}{a} & c & c-\lambda  \\
	\end{array}
	\right), \left(
	\begin{array}{cccc}
		0 & -b & a & a \\
		0 & b & -a & -a \\
		0 & \frac{b (b+c+\lambda )}{a} & -b-c-\lambda  & -b-c-\lambda  \\
		0 & -\frac{b c}{a} & c & c \\
	\end{array}
	\right),
	$$
	$$
	\left(
	\begin{array}{cccc}
		-\lambda  & -b-\lambda  & a & a \\
		0 & b & -a & -a \\
		0 & \frac{(b+c) (b+\lambda )}{a} & -b-c-\lambda  & -b-c \\
		0 & -\frac{c (b+\lambda )}{a} & c & c-\lambda  \\
	\end{array}
	\right),\left(
	\begin{array}{cccc}
		-\lambda  & -b-\lambda  & a & a \\
		0 & b & -a & -a \\
		0 & \frac{b (b+c+\lambda )}{a} & -b-c-\lambda  & -b-c-\lambda  \\
		0 & -\frac{b c}{a} & c & c \\
	\end{array}
	\right),
	$$
	$$
	\left(
	\begin{array}{cccc}
		0 & b & a & a \\
		0 & b & a & a \\
		0 & -\frac{(b+c) (b+\lambda )}{a} & -b-c-\lambda  & -b-c \\
		0 & \frac{c (b+\lambda )}{a} & c & c-\lambda  \\
	\end{array}
	\right),\left(
	\begin{array}{cccc}
		0 & b & a & a \\
		0 & b & a & a \\
		0 & -\frac{b (b+c+\lambda )}{a} & -b-c-\lambda  & -b-c-\lambda  \\
		0 & \frac{b c}{a} & c & c \\
	\end{array}
	\right),
	$$
	$$
	\left(
	\begin{array}{cccc}
		-\lambda  & b+\lambda  & a & a \\
		0 & b & a & a \\
		0 & -\frac{(b+c) (b+\lambda )}{a} & -b-c-\lambda  & -b-c \\
		0 & \frac{c (b+\lambda )}{a} & c & c-\lambda  \\
	\end{array}
	\right),\left(
	\begin{array}{cccc}
		-\lambda  & b+\lambda  & a & a \\
		0 & b & a & a \\
		0 & -\frac{b (b+c+\lambda )}{a} & -b-c-\lambda  & -b-c-\lambda  \\
		0 & \frac{b c}{a} & c & c \\
	\end{array}
	\right),
	$$

	$$
	\left(
	\begin{array}{cccc}
		0 & 0 & 0 & 0 \\
		0 & 0 & 0 & 0 \\
		0 & 0 & 0 & 0 \\
		0 & 0 & 0 & 0 \\
	\end{array}
	\right),\left(
	\begin{array}{cccc}
		-\lambda  & 0 & 0 & 0 \\
		0 & -\lambda  & 0 & 0 \\
		0 & 0 & -\lambda  & 0 \\
		0 & 0 & 0 & -\lambda  \\
	\end{array}
	\right),
	\left(
	\begin{array}{cccc}
		-\lambda  & -\lambda  & 0 & 0 \\
		0 & 0 & 0 & 0 \\
		0 & 0 & 0 & 0 \\
		0 & 0 & 0 & 0 \\
	\end{array}
	\right),\left(
	\begin{array}{cccc}
		-\lambda  & \lambda  & 0 & 0 \\
		0 & 0 & 0 & 0 \\
		0 & 0 & 0 & 0 \\
		0 & 0 & 0 & 0 \\
	\end{array}
	\right),
	$$

	$$
	\left(
	\begin{array}{cccc}
		0 & -\lambda  & 0 & 0 \\
		0 & -\lambda  & 0 & 0 \\
		0 & 0 & -\lambda  & 0 \\
		0 & 0 & 0 & -\lambda  \\
	\end{array}
	\right),\left(
	\begin{array}{cccc}
		0 & \lambda  & 0 & 0 \\
		0 & -\lambda  & 0 & 0 \\
		0 & 0 & -\lambda  & 0 \\
		0 & 0 & 0 & -\lambda  \\
	\end{array}
	\right),
	\left(
	\begin{array}{cccc}
		0 & -\lambda  & 0 & 0 \\
		0 & -\lambda  & 0 & 0 \\
		0 & a & b & -b-\lambda  \\
		0 & \frac{a b}{b+\lambda } & b & -b-\lambda  \\
	\end{array}
	\right),
	$$$$\left(
	\begin{array}{cccc}
		0 & \lambda  & 0 & 0 \\
		0 & -\lambda  & 0 & 0 \\
		0 & a & b & -b-\lambda  \\
		0 & \frac{a b}{b+\lambda } & b & -b-\lambda  \\
	\end{array}
	\right),\left(
	\begin{array}{cccc}
		0 & -\lambda  & 0 & 0 \\
		0 & -\lambda  & 0 & 0 \\
		0 & a & -b & \lambda -b \\
		0 & \frac{a b}{\lambda -b} & b & b-\lambda  \\
	\end{array}
	\right),\left(
	\begin{array}{cccc}
		0 & \lambda  & 0 & 0 \\
		0 & -\lambda  & 0 & 0 \\
		0 & a & -b & \lambda -b \\
		0 & \frac{a b}{\lambda -b} & b & b-\lambda  \\
	\end{array}
	\right),
	$$
	$$
	\left(
	\begin{array}{cccc}
		-\lambda  & 0 & 0 & 0 \\
		0 & -\lambda  & 0 & 0 \\
		0 & a & b & -b-\lambda  \\
		0 & \frac{a b}{b+\lambda } & b & -b-\lambda  \\
	\end{array}
	\right),\left(
	\begin{array}{cccc}
		-\lambda  & 0 & 0 & 0 \\
		0 & -\lambda  & 0 & 0 \\
		0 & a & -b & \lambda -b \\
		0 & \frac{a b}{\lambda -b} & b & b-\lambda  \\
	\end{array}
	\right),
	$$
	$$
	\left(
	\begin{array}{cccc}
		0 & 0 & 0 & 0 \\
		0 & 0 & 0 & 0 \\
		0 & a & b-\lambda  & \lambda -b \\
		0 & -\frac{a b}{\lambda -b} & b & -b \\
	\end{array}
	\right),\left(
	\begin{array}{cccc}
		0 & 0 & 0 & 0 \\
		0 & 0 & 0 & 0 \\
		0 & a & -b-\lambda  & -b-\lambda  \\
		0 & -\frac{a b}{b+\lambda } & b & b \\
	\end{array}
	\right),
	$$
	$$
	\left(
	\begin{array}{cccc}
		-\lambda  & -\lambda  & 0 & 0 \\
		0 & 0 & 0 & 0 \\
		0 & a & b-\lambda  & \lambda -b \\
		0 & -\frac{a b}{\lambda -b} & b & -b \\
	\end{array}
	\right),\left(
	\begin{array}{cccc}
		-\lambda  & \lambda  & 0 & 0 \\
		0 & 0 & 0 & 0 \\
		0 & a & b-\lambda  & \lambda -b \\
		0 & -\frac{a b}{\lambda -b} & b & -b \\
	\end{array}
	\right),$$$$\left(
	\begin{array}{cccc}
		-\lambda  & -\lambda  & 0 & 0 \\
		0 & 0 & 0 & 0 \\
		0 & a & -b-\lambda  & -b-\lambda  \\
		0 & -\frac{a b}{b+\lambda } & b & b \\
	\end{array}
	\right),\left(
	\begin{array}{cccc}
		-\lambda  & \lambda  & 0 & 0 \\
		0 & 0 & 0 & 0 \\
		0 & a & -b-\lambda  & -b-\lambda  \\
		0 & -\frac{a b}{b+\lambda } & b & b \\
	\end{array}
	\right),
	$$
$$
\left(
\begin{array}{cccc}
-\frac{3 \lambda}{2}  & \frac{\lambda }{2} & 0 & 0 \\
	-\frac{\lambda }{2} & -\frac{\lambda }{2} & 0 & 0 \\
	a & a & 0 & 0 \\
	b & b & 0 & 0 \\
\end{array}
\right),\left(
\begin{array}{cccc}
	\frac{\lambda }{2} & \frac{\lambda }{2} & 0 & 0 \\
	-\frac{\lambda }{2} & -\frac{\lambda }{2} & 0 & 0 \\
	a & a & 0 & 0 \\
	b & b & 0 & 0 \\
\end{array}
\right),\left(
\begin{array}{cccc}
-\frac{3 \lambda}{2}  & \frac{\lambda }{2} & 0 & 0 \\
	-\frac{\lambda }{2} & -\frac{\lambda }{2} & 0 & 0 \\
	a & -a & -\lambda  & 0 \\
	b & -b & 0 & -\lambda  \\
\end{array}
\right),\left(
\begin{array}{cccc}
	\frac{\lambda }{2} & \frac{\lambda }{2} & 0 & 0 \\
	-\frac{\lambda }{2} & -\frac{\lambda }{2} & 0 & 0 \\
	a & -a & -\lambda  & 0 \\
	b & -b & 0 & -\lambda  \\
\end{array}
\right),
$$
$$
\left(
\begin{array}{cccc}
	-\frac{\lambda }{2} & -\frac{\lambda }{2} & 0 & 0 \\
	-\frac{\lambda }{2} & -\frac{\lambda }{2} & 0 & 0 \\
	a & -b & -\frac{\lambda }{2} & -\frac{\lambda }{2} \\
	b & -a & -\frac{\lambda }{2} & -\frac{\lambda }{2} \\
\end{array}
\right),\left(
\begin{array}{cccc}
	-\frac{\lambda }{2} & -\frac{\lambda }{2} & 0 & 0 \\
	-\frac{\lambda }{2} & -\frac{\lambda }{2} & 0 & 0 \\
	a & b & -\frac{\lambda }{2} & \frac{\lambda }{2} \\
	b & a & \frac{\lambda }{2} & -\frac{\lambda }{2} \\
\end{array}
\right),
\left(
\begin{array}{cccc}
-\frac{3 \lambda}{2}  & \frac{\lambda }{2} & 0 & 0 \\
	-\frac{\lambda }{2} & -\frac{\lambda }{2} & 0 & 0 \\
	a & -b & -\frac{\lambda }{2} & -\frac{\lambda }{2} \\
	b & -a & -\frac{\lambda }{2} & -\frac{\lambda }{2} \\
\end{array}
\right),$$$$\left(
\begin{array}{cccc}
	\frac{\lambda }{2} & \frac{\lambda }{2} & 0 & 0 \\
	-\frac{\lambda }{2} & -\frac{\lambda }{2} & 0 & 0 \\
	a & -b & -\frac{\lambda }{2} & -\frac{\lambda }{2} \\
	b & -a & -\frac{\lambda }{2} & -\frac{\lambda }{2} \\
\end{array}
\right),\left(
\begin{array}{cccc}
-\frac{3 \lambda}{2}  & \frac{\lambda }{2} & 0 & 0 \\
	-\frac{\lambda }{2} & -\frac{\lambda }{2} & 0 & 0 \\
	a & b & -\frac{\lambda }{2} & \frac{\lambda }{2} \\
	b & a & \frac{\lambda }{2} & -\frac{\lambda }{2} \\
\end{array}
\right),\left(
\begin{array}{cccc}
	\frac{\lambda }{2} & \frac{\lambda }{2} & 0 & 0 \\
	-\frac{\lambda }{2} & -\frac{\lambda }{2} & 0 & 0 \\
	a & b & -\frac{\lambda }{2} & \frac{\lambda }{2} \\
	b & a & \frac{\lambda }{2} & -\frac{\lambda }{2} \\
\end{array}
\right),
$$

$$
\left(
\begin{array}{cccc}
-\frac{3 \lambda}{2}  & -\frac{\lambda }{2} & 0 & 0 \\
	\frac{\lambda }{2} & -\frac{\lambda }{2} & 0 & 0 \\
	a & -a & 0 & 0 \\
	b & -b & 0 & 0 \\
\end{array}
\right),\left(
\begin{array}{cccc}
	\frac{\lambda }{2} & -\frac{\lambda }{2} & 0 & 0 \\
	\frac{\lambda }{2} & -\frac{\lambda }{2} & 0 & 0 \\
	a & -a & 0 & 0 \\
	b & -b & 0 & 0 \\
\end{array}
\right),
\left(
\begin{array}{cccc}
-\frac{3 \lambda}{2}  & -\frac{\lambda }{2} & 0 & 0 \\
	\frac{\lambda }{2} & -\frac{\lambda }{2} & 0 & 0 \\
	a & a & -\lambda  & 0 \\
	b & b & 0 & -\lambda  \\
\end{array}
\right),$$$$\left(
\begin{array}{cccc}
	\frac{\lambda }{2} & -\frac{\lambda }{2} & 0 & 0 \\
	\frac{\lambda }{2} & -\frac{\lambda }{2} & 0 & 0 \\
	a & a & -\lambda  & 0 \\
	b & b & 0 & -\lambda  \\
\end{array}
\right),
\left(
\begin{array}{cccc}
	-\frac{\lambda }{2} & \frac{\lambda }{2} & 0 & 0 \\
	\frac{\lambda }{2} & -\frac{\lambda }{2} & 0 & 0 \\
	a & b & -\frac{\lambda }{2} & -\frac{\lambda }{2} \\
	b & a & -\frac{\lambda }{2} & -\frac{\lambda }{2} \\
\end{array}
\right),\left(
\begin{array}{cccc}
	-\frac{\lambda }{2} & \frac{\lambda }{2} & 0 & 0 \\
	\frac{\lambda }{2} & -\frac{\lambda }{2} & 0 & 0 \\
	a & -b & -\frac{\lambda }{2} & \frac{\lambda }{2} \\
	b & -a & \frac{\lambda }{2} & -\frac{\lambda }{2} \\
\end{array}
\right),
$$
$$
\left(
\begin{array}{cccc}
-\frac{3 \lambda}{2}  & -\frac{\lambda }{2} & 0 & 0 \\
	\frac{\lambda }{2} & -\frac{\lambda }{2} & 0 & 0 \\
	a & b & -\frac{\lambda }{2} & -\frac{\lambda }{2} \\
	b & a & -\frac{\lambda }{2} & -\frac{\lambda }{2} \\
\end{array}
\right),\left(
\begin{array}{cccc}
	\frac{\lambda }{2} & -\frac{\lambda }{2} & 0 & 0 \\
	\frac{\lambda }{2} & -\frac{\lambda }{2} & 0 & 0 \\
	a & b & -\frac{\lambda }{2} & -\frac{\lambda }{2} \\
	b & a & -\frac{\lambda }{2} & -\frac{\lambda }{2} \\
\end{array}
\right),\left(
\begin{array}{cccc}
-\frac{3 \lambda}{2}  & -\frac{\lambda }{2} & 0 & 0 \\
	\frac{\lambda }{2} & -\frac{\lambda }{2} & 0 & 0 \\
	a & -b & -\frac{\lambda }{2} & \frac{\lambda }{2} \\
	b & -a & \frac{\lambda }{2} & -\frac{\lambda }{2} \\
\end{array}
\right),\left(
\begin{array}{cccc}
	\frac{\lambda }{2} & -\frac{\lambda }{2} & 0 & 0 \\
	\frac{\lambda }{2} & -\frac{\lambda }{2} & 0 & 0 \\
	a & -b & -\frac{\lambda }{2} & \frac{\lambda }{2} \\
	b & -a & \frac{\lambda }{2} & -\frac{\lambda }{2} \\
\end{array}
\right),
$$
$$
\left(
\begin{array}{cccc}
-\frac{3 \lambda}{2}  & \frac{\lambda }{2} & 0 & 0 \\
	-\frac{\lambda }{2} & -\frac{\lambda }{2} & 0 & 0 \\
	a & a & 0 & 0 \\
	b & b & 0 & 0 \\
\end{array}
\right),\left(
\begin{array}{cccc}
	\frac{\lambda }{2} & \frac{\lambda }{2} & 0 & 0 \\
	-\frac{\lambda }{2} & -\frac{\lambda }{2} & 0 & 0 \\
	a & a & 0 & 0 \\
	b & b & 0 & 0 \\
\end{array}
\right),\left(
\begin{array}{cccc}
-\frac{3 \lambda}{2}  & \frac{\lambda }{2} & 0 & 0 \\
	-\frac{\lambda }{2} & -\frac{\lambda }{2} & 0 & 0 \\
	a & -a & -\lambda  & 0 \\
	b & -b & 0 & -\lambda  \\
\end{array}
\right),\left(
\begin{array}{cccc}
	\frac{\lambda }{2} & \frac{\lambda }{2} & 0 & 0 \\
	-\frac{\lambda }{2} & -\frac{\lambda }{2} & 0 & 0 \\
	a & -a & -\lambda  & 0 \\
	b & -b & 0 & -\lambda  \\
\end{array}
\right),
$$
$$
\left(
\begin{array}{cccc}
	-\frac{\lambda }{2} & -\frac{\lambda }{2} & 0 & 0 \\
	-\frac{\lambda }{2} & -\frac{\lambda }{2} & 0 & 0 \\
	a & a & 0 & 0 \\
	b & b & 0 & 0 \\
\end{array}
\right),\left(
\begin{array}{cccc}
	-\frac{\lambda }{2} & -\frac{\lambda }{2} & 0 & 0 \\
	-\frac{\lambda }{2} & -\frac{\lambda }{2} & 0 & 0 \\
	a & -a & -\lambda  & 0 \\
	b & -b & 0 & -\lambda  \\
\end{array}
\right),
\left(
\begin{array}{cccc}
	-\frac{\lambda }{2} & \frac{\lambda }{2} & 0 & 0 \\
	\frac{\lambda }{2} & -\frac{\lambda }{2} & 0 & 0 \\
	a & -a & 0 & 0 \\
	b & -b & 0 & 0 \\
\end{array}
\right),\left(
\begin{array}{cccc}
	-\frac{\lambda }{2} & \frac{\lambda }{2} & 0 & 0 \\
	\frac{\lambda }{2} & -\frac{\lambda }{2} & 0 & 0 \\
	a & a & -\lambda  & 0 \\
	b & b & 0 & -\lambda  \\
\end{array}
\right),
$$
where $a,b,c$ are parameters.

\end{theorem}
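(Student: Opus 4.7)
The plan is to apply Theorem \ref{thm1} to translate the Rota--Baxter identity $\mathcal{P}(x)\mathcal{P}(y)=\mathcal{P}(\mathcal{P}(x)y)+\mathcal{P}(x\mathcal{P}(y))+\lambda\mathcal{P}(xy)$ into an explicit polynomial system in the sixteen entries $a_{ij}$ of $P$, and then to classify its solutions by structured case analysis. Specializing the four matrix identities of Theorem \ref{thm1} to the pairs $(e_i,e_j)$ with $i,j\in\{0,1,2,3\}$ and reading off the coefficient of each basis vector $e_k$, one obtains exactly the $58$ scalar equations (b1)--(b58). Thus $\mathcal P$ is a Rota--Baxter operator of weight $\lambda$ on $\mathbb{H}_{ss}$ if and only if $(a_{ij})$ solves this system; the listed matrices will be shown to be precisely the solution set.

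The first step is to extract the structural constraints hidden in the simplest equations. Equations (b22), (b26), (b30) factor as $a_{21}(2a_{22}+\lambda)=0$, $a_{31}(2a_{22}+\lambda)=0$, $a_{41}(2a_{22}+\lambda)=0$, producing the fundamental dichotomy: either $a_{22}=-\lambda/2$, or $a_{21}=a_{31}=a_{41}=0$. Next, the quadratic equations (b35) (reading $a_{23}^{2}=a_{13}^{2}$ after correcting the evident typo $a_{35}\mapsto a_{23}$), (b39), (b49), (b52) give $a_{13}^{2}=a_{14}^{2}=a_{23}^{2}=a_{24}^{2}$, so these four entries share a common absolute value and only finitely many sign patterns occur. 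Combining (b3)+(b33) and (b4)+(b46) yields the clean relations $a_{14}a_{21}=0$ and $a_{13}a_{21}=0$, while (b19)+(b34), (b20)+(b47) give $a_{14}(a_{33}+a_{43})=0$ and $a_{13}(a_{34}+a_{44})=0$. These linear consequences collapse many entries to $0$ in each branch.

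With the dichotomy on $a_{22}$ and the sign-pattern cases for $(a_{13},a_{14},a_{23},a_{24})$ fixed, the remaining equations become largely linear. In the branch $a_{21}=a_{31}=a_{41}=0$ with $a_{13}=a_{14}=a_{23}=a_{24}=0$, the system reduces to the block containing (b1), (b2), (b17), (b18) together with (b11), (b12), (b15), (b16) and their partners, and the solutions are read off as the families of matrices in which only $a_{12}, a_{32}, a_{42}$ and a diagonal entry at positions $(1,1)$ or $(3,3)$ or $(4,4)$ survive; these yield the first block of listed matrices. In the branch $a_{22}=-\lambda/2$, the system forces $a_{11}\in\{-3\lambda/2,\lambda/2,-\lambda/2\}$ and pairs $a_{12}$ with $\pm\lambda/2$, which delivers the large final block of matrices with $\lambda/2$ coefficients. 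The mixed branches, where some of $a_{13},a_{14},a_{23},a_{24}$ are non-zero, produce the parametric families involving the rational expressions $b(b-c+\lambda)/a$, $(b-c)(b+\lambda)/a$ and so on, by solving the remaining equations (b11)--(b16), (b27), (b28), (b31), (b32), (b40)--(b45), (b53)--(b58) as a linear system in the entries of rows $3$ and $4$ (with $a\neq 0$ entering as a denominator).

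The principal obstacle is bookkeeping: the sheer number of case splits, each triggering further sub-cases depending on which denominators (such as $a$, $b+\lambda$, $\lambda-b$) are allowed to vanish, makes the human verification delicate and error-prone. It is for precisely this reason that the authors invoke \textbf{Mathematica}'s \textbf{Reduce} on the full system; the computer algebra output is then organized into the families displayed in the theorem statement. Sufficiency in the opposite direction -- that every matrix in the list indeed solves the Rota--Baxter equation of weight $\lambda$ -- is then verified by direct substitution of each parametric family into (b1)--(b58), which is a finite, mechanical check that can likewise be automated or performed block by block.
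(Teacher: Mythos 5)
Your proposal follows essentially the same route as the paper: both derive the scalar system (b1)--(b58) from Theorem \ref{thm1} and then obtain the classification by running \textbf{Reduce} in \textbf{Mathematica} on that system, the paper offering no hand-made case analysis at all. Your additional structural observations (the dichotomy $a_{22}=-\lambda/2$ versus $a_{21}=a_{31}=a_{41}=0$ from (b22), (b26), (b30), and $|a_{13}|=|a_{14}|=|a_{23}|=|a_{24}|$) are correct and go beyond the paper, though one auxiliary claim is off --- subtracting (b34) from (b19) actually gives $a_{13}a_{43}+a_{14}(a_{22}+a_{33}+\lambda)=0$ rather than $a_{14}(a_{33}+a_{43})=0$ --- but this does not affect the argument since the classification itself rests on the computer-algebra step, exactly as in the paper.
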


\section{Relationships with the results of Ma et al. on the Sweedler algebra }

In this section, we shall discuss the relationships between our results and the results of Ma et al. on the Sweedler algebra. Firstly, we recall the Sweedler algebra. Sweedler  algebra $\mathbb{H}_{4}$ is generated by two elements $g$ and $\nu$  which satisfy
$$
g^{2}=1, \nu^{2}=0, g\nu+\nu g=0.
$$
The comultiplication, the antipode and the counit of  $\mathbb{H}_{4}$ are given by
$$
\Delta(g)=g\otimes g, \Delta(\nu)=g\otimes\nu+\nu\otimes 1, \varepsilon(g)=1, \varepsilon(\nu)=0, S(g)=g, S(\nu)=-g\nu.
$$
Notice that the dimension of $\mathbb{H}_{4}$ is four, and $1,g,\nu, g\nu$ form a basis for  $\mathbb{H}_{4}$. By taking $e_{1}=g,e_{2}=\nu, e_{3}=gv$, we observe that  $\mathbb{H}_{4}$ as an algebra  is just a split semi-quaternion algebra.

In \cite{Ma}(also see  \cite{Ma1}), some  Rota-Baxter operators on $\mathbb{H}_{4}$ are described. It was proven that the following  operators are  Rota-Baxter operators:
\begin{itemize}
	\item [(S1)]~$\mathcal{P}(1)=0, \mathcal{P}(g)=0, \mathcal{P}(\nu)=0, \mathcal{P}(g\nu)=0$,
	\item [(S2)]~$\mathcal{P}(1)=0, \mathcal{P}(g)=0, \mathcal{P}(\nu)=-\lambda \nu, \mathcal{P}(g\nu)=-\lambda g\nu$,
	\item [(S3)]~$\mathcal{P}(1)=-\lambda 1, \mathcal{P}(g)=-\lambda g, \mathcal{P}(\nu)=0, \mathcal{P}(g\nu)=0$,
	\item [(S4)]~$\mathcal{P}(1)=-\lambda 1, \mathcal{P}(g)=-\lambda g, \mathcal{P}(\nu)=-\lambda \nu, \mathcal{P}(g\nu)=-\lambda g\nu$,
	\item [(S5)]~$\mathcal{P}(1)=0, \mathcal{P}(g)=-a 1+a g-\frac{(\lambda + a)(\lambda+a+b)}{c}\nu+\frac{(\lambda + a)(\lambda+b)}{c}g\nu, \mathcal{P}(\nu)=-c 1+c g-(2\lambda+a+b)\nu+(\lambda+b)g\nu, \mathcal{P}(g\nu)=-c 1+c g-(\lambda+a+b)\nu+bg\nu$,
	\item [(S6)]~$\mathcal{P}(1)=-\lambda 1, \mathcal{P}(g)=(\lambda+a) 1+a g-\frac{(\lambda + a)(\lambda+a+b)}{c}\nu+\frac{(\lambda + a)(\lambda+b)}{c}g\nu, \mathcal{P}(\nu)=c 1+c g-(2\lambda+a+b)\nu+(\lambda+b)g\nu, \mathcal{P}(g\nu)=c 1+c g-(\lambda+a+b)\nu+bg\nu$,
	\item [(S7)]~$\mathcal{P}(1)=-\lambda 1, \mathcal{P}(g)=\lambda1+a \nu+\frac{ab}{\lambda+b}g\nu, \mathcal{P}(\nu)=-(\lambda+b)\nu-bg\nu, \mathcal{P}(g\nu)=(\lambda+b)\nu+bg\nu$,
	\item [(S8)]~$\mathcal{P}(1)=-\lambda 1, \mathcal{P}(g)=\lambda 1+\frac{\lambda(\lambda + a)}{b}\nu+\frac{\lambda(\lambda + a)}{b}g\nu, \mathcal{P}(\nu)=-b 1-b g-(2\lambda+a)\nu-(\lambda+a)g\nu, \mathcal{P}(g\nu)=b 1+b g+(\lambda+a)\nu+ag\nu$,
	\item [(S9)]~$\mathcal{P}(1)=\frac{1}{2}\lambda 1-\frac{1}{2}\lambda g+a \nu + b g\nu, \mathcal{P}(g)=\frac{1}{2}\lambda 1-\frac{1}{2}\lambda g-b \nu + a g\nu, \mathcal{P}(\nu)=-\frac{1}{2}\lambda \nu - \frac{1}{2}\lambda g\nu, \mathcal{P}(g\nu)=-\frac{1}{2}\lambda \nu - \frac{1}{2}\lambda g\nu$,
\end{itemize}
where $a,b,c$ are parameters.

From (S1)-(S9), we have the following matrices of $\mathcal{P}$ with respect to $1,g,\nu,g\nu$: 
$$
\left(
\begin{array}{cccc}
	0 & 0 & 0 & 0 \\
	0 & 0 & 0 & 0 \\
	0 & 0 & 0 & 0 \\
	0 & 0 & 0 & 0 \\
\end{array}
\right)(1), \left(
\begin{array}{cccc}
	0 & 0 & 0 & 0 \\
	0 & 0 & 0 & 0 \\
	0 & 0 & -\lambda  & 0 \\
	0 & 0 & 0 & -\lambda  \\
\end{array}
\right)(2),\left(
\begin{array}{cccc}
	-\lambda  & 0 & 0 & 0 \\
	0 & -\lambda  & 0 & 0 \\
	0 & 0 & 0 & 0 \\
	0 & 0 & 0 & 0 \\
\end{array}
\right)(3),\left(
\begin{array}{cccc}
	-\lambda  & 0 & 0 & 0 \\
	0 & -\lambda  & 0 & 0 \\
	0 & 0 & -\lambda  & 0 \\
	0 & 0 & 0 & -\lambda  \\
\end{array}
\right)(4),
$$
$$
\left(
\begin{array}{cccc}
	0 & -a & -c & -c \\
	0 & a & c & c \\
	0 & -\frac{(a+\lambda ) (a+b+\lambda )}{c} & -(a+b+2 \lambda ) & -(a+b+\lambda ) \\
	0 & \frac{(a+\lambda ) (b+\lambda )}{c} & b+\lambda  & b \\
\end{array}
\right)(5),
$$
$$
\left(
\begin{array}{cccc}
	-\lambda  & a+\lambda  & c & c \\
	0 & a & c & c \\
	0 & -\frac{(a+\lambda ) (a+b+\lambda )}{c} & -(a+b+2 \lambda ) & -(a+b+\lambda ) \\
	0 & \frac{(a+\lambda ) (b+\lambda )}{c} & b+\lambda  & b \\
\end{array}
\right)(6),
$$
$$
\left(
\begin{array}{cccc}
	-\lambda  & \lambda  & 0 & 0 \\
	0 & 0 & 0 & 0 \\
	0 & a & -(b+\lambda ) & b+\lambda  \\
	0 & \frac{a b}{b+\lambda } & -b & b \\
\end{array}
\right)(7),\left(
\begin{array}{cccc}
	-\lambda  & \lambda  & -b & b \\
	0 & 0 & -b & b \\
	0 & \frac{\lambda  (a+\lambda )}{b} & -(a+2 \lambda ) & a+\lambda  \\
	0 & \frac{\lambda  (a+\lambda )}{b} & -(a+\lambda ) & a \\
\end{array}
\right)(8),
$$
$$
\left(
\begin{array}{cccc}
	\frac{\lambda }{2} & \frac{\lambda }{2} & 0 & 0 \\
	-\frac{\lambda }{2} & -\frac{\lambda }{2} & 0 & 0 \\
	a & -b & -\frac{\lambda }{2} & -\frac{\lambda }{2} \\
	b & a & -\frac{\lambda }{2} & -\frac{\lambda }{2} \\
\end{array}
\right)(9).
$$
Observe easily that the matrices (1),(2),(3),(4),(7),(9) just the special cases of Theorem \ref{k1}. For the matrix $$
\left(
\begin{array}{cccc}
	0 & -b & a & a \\
	0 & b & -a & -a \\
	0 & \frac{(b+c) (b+\lambda )}{a} & -b-c-\lambda  & -b-c \\
	0 & -\frac{c (b+\lambda )}{a} & c & c-\lambda  \\
\end{array}
\right)$$in Theorem \ref{k1}, by replacing  $b, a, c$ with $a,-c,\lambda+b$ in the matrix above, we have the desired matrix (5). Similarly, we can gain the desired (6) and (8). So we have checked that the matrices (1)-(9) are special cases of Theorem \ref{k1}, which means that the operators (S1)-(S9) are exact Rota-Baxter operators.

\section*{summary}
In this paper, we describe all the Rota-Baxter operators with any weight on split semi-quaternion algebra. By applying to Sweedler algebra, we gain all  the Rota-Baxter operators with any weight on Sweedler algebra. Theorems \ref{k2} and \ref{k1} are very important. In the future work, we want to use  Theorems \ref{k2} and \ref{k1} to discuss the Rota-Baxter operators on other types of quaternion algebra.

\end{document}